\documentclass[12pt]{article}

\usepackage{amstext}
\usepackage{amsthm}
\usepackage{amsmath}
\usepackage{latexsym}
\usepackage{amsfonts}

\newtheorem{theorem}{Theorem}
\newtheorem{lemma}[theorem]{Lemma}
\newtheorem{claim}[theorem]{Claim}
\newtheorem{proposition}[theorem]{Proposition}
\newtheorem{corollary}[theorem]{Corollary}
\theoremstyle{remark}
\theoremstyle{example}

\def\vol{\operatorname{vol}}

\def\rank{\operatorname{rank}}

\newcommand\CC{{\mathbb C}}

\newcommand\DD{{\mathbb D}}
\newcommand\TT{{\mathbb T}}

\title{Composition operators on the polydisc}
\author{\L ukasz Kosi\'nski
	\thanks{Partially supported by the NCN grant SONATA BIS no. 2017/26/E/ST1/00723}
}
\date{\today}

\begin{document}

\maketitle

\section{Abstract}

In this paper we study the boundedness of composition operators on the weighted Bergman spaces and the Hardy space over the polydisc $\DD^n$. Studying the volume of sublevel sets we show for which $n$ the necessary conditions obtained by Bayart are sufficient. For arbitrary polydisc we prove the rank sufficiency theorem which, in particular, provides us with a simple criterion describing boundedness of composition operators on the spaces over the bidisc. Such a consistent characterization is obtained for the classical Bergman space over the tridisc.

\section{Introduction and statement of the result}

Let $X$ be a Banach space of holomorphic functions on a domain $D\subset \DD^n$ and $\phi$ be a holomorphic self-mapping of $D$. The composition operator, denoted by $C_\phi$, is 
$$f \mapsto f\circ \phi,\quad f\in X.$$
The basic question that matters is for which $\phi$ the symbol $C_\phi$ defines a bounded operator on $X$.

Within the paper we are interested in weighted Bergman and Hardy function spaces over the polydisc. Let us recall their definitions. Let $dA$ denote the normalized measure over the unit disc $\mathbb D$. For $\beta>-1$ we put $dA_\beta (z) = (\beta+1) (1-|z|^2)^\beta dA(z)$. On the polydisc $\DD^n$ one defines $$dV_\beta(z) = dA_\beta(z_1) \ldots dA_\beta (z_n), \quad z=(z_1,\ldots, z_n)\in \DD^n.$$ By $d\sigma$ we denote the normalized surface measure on the torus $\TT^n$.

Then the Bergman space $A_\beta^2(\DD^n)$, where $\beta>-1$, consists of $f\in \mathcal O(\DD^n)$ such that 
$$||f||^2_{A^2_\beta (\DD^n)} = \int_{\DD^n} |f(z)|^2 d V_\beta(z) < \infty.$$
Thus $A^2(\DD^n) := A_0^2(\DD^n)$ is the classical Bergman space.

The Hardy space $H^2(\DD^n)$ is composed of functions $g\in \mathcal O(\DD^n)$ such that 
$$||g||^2_{H^2(\DD^n)} = \sup_{0<r <1} \int_{\TT^n} |g(r \xi)|^2 d \sigma(\xi) <\infty.$$

The problem of continuity of composition operators on Bergman and Hardy spaces over the polydisc has been studied in \cite{KSZ} and then revisited and partially corrected in \cite{Bay}. The second paper constitutes the main motivation for our research. Roughly speaking, the essential result of Bayart is that the problem of continuity of composition operators on the mentioned function spaces involves \emph{delicate} properties of the derivative of the symbol. He showed that if $\phi:\DD^n \to \DD^n$ extends holomorphically to a neighbourhood of $\overline \DD^n$, then the symbol $\phi$ needs to satisfy certain conditions to make $C_\phi$ continuous. Since they rely only on the first derivative, we shall call them the {\it first order} or sometimes \emph{linear conditions}. The second name is justified by the fact that Bayart showed that the first order conditions are sufficient for linear maps. Precisely, carrying out complicated and clever computations he found an algorithm that classified all linear maps $\phi:\DD^n\to \DD^n$ such that $C_\phi$ was bounded. The second main result of \cite{Bay} is a general condition sufficient for continuity of $C_\phi$ over the classical Bergman space $A(\DD^n)$ if the symbol is in $\mathcal O(\DD^n, \DD^n)\cap \mathcal C^2(\overline \DD^n)$. This allowed Bayart to formulate a necessary and sufficient condition that guaranteed continuity of the composition operator on the classical Bergman space $A^2(\DD^2)$, when the symbol was a holomorphic self-map of the bidisc extending holomorphically to a neighbourhood of its closure.

\bigskip 

Let us outline some main results of our paper.
We start with achieving upper and lower bounds for volumes of sublevel sets $\{z\in \DD^n:\ |f(z)-\eta|<\delta\}$, where $f\in \mathcal O(\DD^n,\DD)$ is sufficiently smooth on $\overline \DD^n$ and $\eta\in \TT$. Saying briefly we show that these volumes can be estimated by $\delta^{n+1}$ from above and $\delta^{(3n+1)/2}$ from below. We also show, indicating particular examples, that these bounds are sharp. Equipped with this tool we show that the first order conditions are in general not sufficient. In particular, we prove the following:
\begin{itemize}
	\item first order conditions are sufficient for $A^2(\DD^n)$ if and only if $n\leq 3$;
	\item first order conditions are sufficient for $A^2_\beta(\DD^n)$, $-1<\beta<0,$ (or for $H^2(\DD^n)$) if and only if $n\leq 2$.
\end{itemize}
To obtain sufficiency in the above statements we shall show
\begin{itemize}
	\item the rank-sufficiency theorem for $A_\beta^2(\DD^n)$ and for $H^2(\DD^n)$.
\end{itemize}
The above-mentioned rank theorem implies that the answer in two dimensional settings has a simple formulation:
\begin{itemize}
	\item $\phi\in \mathcal O(\DD^2, \DD^2)\cap \mathcal C(\overline \DD^2)$ defines a bounded composition operator on $A_\beta^2(\DD^2)$ with $\beta>-1$ or on $H^2(\DD^n)$ if and only if the derivative $d_\zeta \phi$ is invertible for all $\zeta\in \TT^2$ such that $\phi(\zeta)\in \TT^2$.
\end{itemize} 
It turns out that a similar simple formulation can be achieved for the classical Bergman space over the tridisc.

Most of the results obtained in the sequel work also for more general spaces $A^p_\beta$ and $H^p$, due to Lemma~5 in \cite{KSZ}.
\bigskip

Amid tools that will be involved in the paper let us mention the characterization of boundedness in terms of volumes of Carleson boxes (see e.g. Lemma~5 in \cite{KSZ}) and its generalization that allowed Bayart to pass from $A_\beta$ to $H^2$ as $\beta \to -1$. Let us recall these ideas. For $\delta = (\delta_1,\ldots, \delta_n)\in (0,1)^n$ and $\xi\in \TT^n$ the Carleson box is the set $S(\xi, \delta)=\{ z\in \DD^n:\ |z_j - \xi_j|<\delta_j,\ j=1,\ldots, n\}$. A holomorphic mapping $\phi:\DD^n\to \DD^n$ defines a bounded operator $C_\phi$ on $A_\beta^2(\DD^n)$ if and only if
\begin{equation}\label{eq:box} V_\beta (\phi^{-1} (S(\xi, \delta))) \leq C_\beta V_\beta (S(\xi, \delta)),\quad \text{for all $\xi\in \TT^n$ and $\delta\in (0,1)^n$},
\end{equation} where $C_\beta$ is a positive constant. It was additionally shown in \cite{Bay}, Proposition~9.3, that if $C_\beta$ can be chosen independently of $-1<\beta<0$, then \eqref{eq:box} entails boundedness of $C_\phi$ on the Hardy space $H^2(\DD^n)$.
 
In the paper we shall need the classical Julia-Carath\'eodory theorem and its multidimensional formulation in the form stated in \cite{Bay} (Lemma~2.5 and Corollary~2.7) as well as other classical tools like the Schwarz lemma etc. We shall also use the following very well known observation that is a consequence of the Montel theorem: if $\varphi\in \mathcal O(\DD^2)\cap \mathcal C(\overline \DD^2)$, then $\varphi(z,\cdot)\in \mathcal O(\DD)$ for any $z\in \TT$.  To avoid introducing too many constants, $f(x)\preceq g(x)$ will always mean that $f(x)\leq C g(x)$ for some positive $C$ that is independent of $x$. The meaning of the symbol $\simeq$ is analogous. $\nabla$ denotes the complex gradient. Moreover, if $I=\{i_1,\ldots, i_k\}\subset \{1,\ldots, n\}$, where $1\leq i_1<\ldots<i_k\leq n$, then we shall sometimes write $z_I$ for $(z_{i_1},\ldots, z_{i_k})$.

\bigskip

We refer the reader to \cite{Sha} for more on composition operators and function spaces. Let us just mention that the problem we are dealing with has been solved for the Hardy space over the Euclidean unit ball $\mathbb B_n$ (\cite{Wog}) and later extended on the weighted Bergman spaces over $\mathbb B_n$ (\cite{KS}).

\section{Preliminary and main results}
\subsection{Upper and lower bounds for sublevel sets}
\begin{lemma}[Upper bound]\label{lem:est} Let $f:\mathbb D^n\to \mathbb D$ be holomorphic and $\mathcal C^1$-smooth on the closed polydisc. Suppose that $f(\zeta)=\eta$ for some $\zeta\in \TT^n$ and $\eta\in \TT$. Suppose also that $z_j\mapsto f(\zeta_1,\ldots, z_j, \ldots ,\zeta_n)$ is not constant for all $j=1,\ldots, n$. Then
\begin{equation}\label{eq:m}\vol(\{z\in \mathbb D^n\cap U:\ |f(z)-\eta|\leq \delta\})\preceq \delta^{n+1}
\end{equation}
for a sufficiently small neighbourhood $U$ of $\zeta$.
\end{lemma}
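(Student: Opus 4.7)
The plan is to localize near $\zeta$ via Julia--Carath\'eodory and then carry out an explicit volume integration in convenient real coordinates. First I would rotate each coordinate so that $\zeta = (1,\ldots,1)$ and $\eta = 1$; this is harmless because the rotation is a volume-preserving unitary. For each $j$, the slice $g_j(w) := f(1,\ldots,w,\ldots,1)$ is a non-constant holomorphic self-map of $\DD$ that is $\mathcal C^1$-smooth at $w = 1$ with $g_j(1) = 1$, so the one-variable Julia--Carath\'eodory theorem forces the derivative $a_j := \partial f/\partial z_j(\zeta)$ to be a strictly positive real number. The first-order Taylor expansion then reads
$$ 1 - f(z) = \sum_{j=1}^n a_j(1 - z_j) - R(z), \qquad R(z) = o(|z - \zeta|). $$

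Next I would introduce real coordinates $u_j := 1 - \operatorname{Re} z_j$ and $v_j := \operatorname{Im} z_j$. The constraint $|z_j| \leq 1$ becomes $u_j \geq 0$ and $v_j^2 \leq 2u_j - u_j^2$, so near $\zeta$ one has $|z - \zeta|^2 \preceq \sum_j u_j$. Separating real and imaginary parts of $|f(z) - 1| \leq \delta$ gives
$$ \sum_j a_j u_j \leq \delta + |R(z)|, \qquad \Big|\sum_j a_j v_j\Big| \leq \delta + |R(z)|. $$
Shrinking $U$ so that $|R(z)| \leq \varepsilon |z - \zeta|$ for a sufficiently small $\varepsilon > 0$, a short bootstrap (solving a quadratic inequality in $\sqrt{\sum_j u_j}$) absorbs the remainder into the linear terms and produces the clean bounds $\sum_j u_j \leq C\delta$ and $|\sum_j a_j v_j| \leq C\delta$, for a constant $C$ depending on $f$ and $U$.

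The desired set is thus contained in
$$ \Omega_\delta = \Big\{ (u,v) : u_j \geq 0,\ v_j^2 \leq 2u_j,\ \textstyle\sum_j a_j u_j \leq C\delta,\ \big|\sum_j a_j v_j\big| \leq C\delta \Big\}, $$
and it suffices to bound $\vol(\Omega_\delta)$. I would integrate $v_1$ out first using the slab constraint, which confines $v_1$ to an interval of length $O(\delta)$; then bound the remaining $v_j$ by $\sqrt{2u_j}$; and finally integrate the $u_j$'s over the simplex $\sum_j a_j u_j \leq C\delta$. A direct computation then gives $\vol(\Omega_\delta) = O(\delta^{(3n+1)/2})$, which is dominated by $\delta^{n+1}$ since $(3n+1)/2 \geq n+1$ for $n \geq 1$.

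The main obstacle is the $\mathcal C^1$-only regularity of $f$: the Taylor remainder is merely $o(|z-\zeta|)$ and not $O(|z-\zeta|^2)$, so the passage from "$\sum_j a_j u_j \leq \delta + |R|$" to the clean "$\sum_j u_j \leq C\delta$" is not automatic and requires the bootstrap above, which is valid only once $U$ has been shrunk so that the remainder is truly subordinate to the linear part, and only for $\delta$ below a threshold determined by $U$ and $f$.
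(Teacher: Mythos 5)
Your argument breaks at the containment step, and the final bound it produces is actually false. The passage from $\sum_j a_j u_j \leq \delta + |R(z)|$ to $\sum_j u_j \leq C\delta$ does not work: with $|R(z)| \leq \varepsilon|z-\zeta|$ and $|z-\zeta| \preceq \sqrt{\sum_j u_j}$, the quadratic inequality in $\sqrt{\sum_j u_j}$ only yields $\sum_j u_j \preceq \delta + \varepsilon^2$, and $\varepsilon$ is a fixed constant once $U$ is chosen while $\delta \to 0$; no shrinking of $U$ makes $\varepsilon^2$ subordinate to $\delta$. This is not a technicality that a better remainder estimate would cure: the containment of $\{|f(z)-1|\leq\delta\}\cap U$ in your set $\Omega_\delta$ is simply false for nonlinear symbols. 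Take $f(z)=z_1z_2$ (the paper's sharpness example, Proposition~\ref{prop:beg}), $\zeta=(1,1)$, $a_1=a_2=1$: the points $z=(e^{i\theta}(1-s), e^{-i\theta}(1-s))$ with $s\simeq\delta$ satisfy $|f(z)-1|\leq\delta$ for every fixed small $\theta$, yet $u_1+u_2\simeq\theta^2$, which is of order one relative to $\delta$. Sweeping over $\theta$ in a fixed interval, the sublevel set near $\zeta$ contains a tube of volume $\simeq\delta^{3}$, so your concluding bound $O(\delta^{(3n+1)/2})=O(\delta^{7/2})$ (for $n=2$) cannot hold; indeed Proposition~\ref{prop:beg} with $\beta=0$ shows the volume for $f(z)=z_1\cdots z_n$ is $\simeq\delta^{n+1}$, which is strictly larger than $\delta^{(3n+1)/2}$ for $n\geq 2$. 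In fact the correct relation between the sublevel set of $f$ and that of its linearization is the \emph{reverse} inclusion: Claim~\ref{claim:up} of the paper shows $|f(z)-1|\preceq|\sum_j a_j(z_j-1)|$ near $\zeta$, which is exactly how the paper proves the \emph{lower} bound $\delta^{(3n+1)/2}$ (Lemma~\ref{lem:lb}); the linearization cannot give an upper bound better than $\delta^{n+1}$ because of tangential cancellations like the one above.

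The paper's proof of the upper bound goes differently and avoids linearization altogether: from $|f(z)-1|\leq\delta$ one gets $|f(z)|\geq1-\delta$, and the uniform Schwarz-type estimate of Lemma~\ref{lem:mob}, applied to the one-variable slices $z_j\mapsto f(w_1,\ldots,z_j,\ldots,w_n)$ (non-constant near $\zeta$ by hypothesis), forces $|z_j|>1-D\delta$ for every $j$; this confines $n-1$ of the coordinates to annuli of area $\simeq\delta$. For the remaining coordinate one uses that $\partial f/\partial z_n$ does not vanish near $\zeta$ (Julia--Carath\'eodory gives $\partial f/\partial z_n(\zeta)>0$), so on each slice the set $\{|f(z_1,\ldots,z_{n-1},\lambda)-1|\leq\delta\}$ lies in a disc of radius $\simeq\delta$, contributing $\delta^2$; Fubini then gives $\delta^{n-1}\cdot\delta^2=\delta^{n+1}$. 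If you want to salvage your computation, you would need to replace the simplex constraint $\sum_j a_j u_j\leq C\delta$ by the correct consequences of $|f(z)-1|\leq\delta$, namely $u_j\preceq\delta$ for each $j$ separately (from the Schwarz-lemma step), together with a one-dimensional constraint of width $\delta$ in a single complex direction; that is precisely the paper's argument.
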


Note that the assumption of Lemma~\ref{lem:est} somehow means that $f$ does not depend on $n-1$-variables (otherwise the estimate would not be true).
We need the following result:

\begin{lemma}\label{lem:mob}
	Let $K$ be a compact set and $\varphi\in \mathcal C(\mathbb D\times K, \mathbb D)$ be such that $\varphi(\cdot, k)\in \mathcal O(\DD, \DD)$ for any $k\in K$. Then there exists a constant $C=C(K)>0$ such that for any $\delta\in (0,1)$ and $k\in K$ the following holds
	$$|x|\leq 1-\delta\ \Longrightarrow\ |\varphi (x, k)|\leq  1- C(K) \delta.$$
\end{lemma}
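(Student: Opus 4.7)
The plan is to read the lemma as a uniform (in $k$) form of the invariant Schwarz--Pick inequality, with the uniformity extracted from compactness of $K$. The central observation is that $\{\varphi(0,k):k\in K\}$ is the continuous image of the compact set $\{0\}\times K$ under $\varphi$, and hence a compact subset of the open disc $\DD$; in particular
$$M := \sup_{k\in K}|\varphi(0,k)|<1.$$
This is the single place where the hypotheses on $\varphi$ and on $K$ really come together: were $\varphi(0,\cdot)$ allowed to approach $\TT$, no uniform constant could exist.

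For the main computation, fix $k\in K$ and note that $\varphi(\cdot,k)\colon\DD\to\DD$ is holomorphic, so the Schwarz--Pick lemma applied at the origin gives
$$\left|\frac{\varphi(x,k)-\varphi(0,k)}{1-\overline{\varphi(0,k)}\,\varphi(x,k)}\right|\leq |x|,$$
i.e.\ $\varphi(x,k)$ lies in the pseudohyperbolic disc of center $\varphi(0,k)$ and radius $|x|$. A routine maximization (parametrise the boundary as a rotation of $|x|$ and pull back by the Möbius map $u\mapsto (u+\varphi(0,k))/(1+\overline{\varphi(0,k)}u)$) shows that the largest Euclidean modulus attained on such a disc equals $(|\varphi(0,k)|+|x|)/(1+|x||\varphi(0,k)|)$. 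Substituting $|x|\leq 1-\delta$ and simplifying yields
$$1-|\varphi(x,k)| \ \geq\ \frac{(1-|\varphi(0,k)|)\,\delta}{1+(1-\delta)|\varphi(0,k)|}\ \geq\ \frac{(1-M)\,\delta}{2},$$
so the constant $C(K):=(1-M)/2$ works.

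There is no genuine obstacle here; the entire content is that compactness of $K$ upgrades the pointwise Schwarz--Pick estimate to a uniform one. The only subtle point is the choice of anchor: one must apply Schwarz--Pick at a base point where $\varphi(\cdot,k)$ admits a uniform-in-$k$ modulus strictly less than $1$. Given the continuity of $\varphi$ on $\DD\times K$ and compactness of $K$, the origin is the natural such point, and every step after that is a direct computation.
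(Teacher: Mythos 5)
Your proof is correct and follows essentially the same route as the paper: apply the Schwarz--Pick lemma at the origin with base point $\alpha_k=\varphi(0,k)$ and convert it into a lower bound on $1-|\varphi(x,k)|$, with compactness of $K$ (via continuity of $\varphi$ on $\DD\times K$) giving the uniform bound $\sup_k|\alpha_k|<1$. Your write-up merely makes explicit the compactness step that the paper leaves implicit, and phrases the elementary computation via the maximal modulus on a pseudohyperbolic disc rather than the identity for $1-|\varphi(x)|^2$; both yield the same constant up to a harmless factor.
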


\begin{proof}
	The assertion is rather clear as an immediate consequence of the Schwarz lemma: put $\alpha_k=\varphi(0,k)$. Then for $|x|\leq 1- \delta$ elementary computations imply that $1-|\varphi(x)|^2\geq \delta |1 - \bar \alpha_k \varphi(x)|^2(1- |\alpha_k|^2)^{-1}\geq \delta (1-|\alpha_k|)/2$.
\end{proof}

\begin{proof}[Proof of Lemma~\ref{lem:est}] We can assume that $\zeta=(1,\ldots, 1)$ and $\eta=1$. Then, by the Julia-Carath\'eodory theorem, the assumption implies that $\frac{\partial f}{\partial z_j}(\zeta)> 0$, $j=1,\ldots, n$.
Clearly, $|f(z)-1|\leq \delta$ implies that $1-\delta \leq |f(z)|.$ It follows from the assumption that for $w\in \overline \DD^n$ close to $\zeta$ and fixed $j$ the function $z_j\mapsto f(w_1,\ldots, z_j, \ldots w_n)$ is in $\mathcal O(\DD,\DD)$ (as it is non-constant). Thus Lemma~\ref{lem:mob} applied to them guarantees that there is $D>0$ such that $|z_j|>1 - D\delta$ for $j=1,\ldots, n$, as $\delta \to 0$. Therefore, volumes of the projections of the set that appears in \eqref{eq:m} onto $z_j$-planes can be estimated from above by $1-(1-D\delta)^2 \simeq \delta$. We use this estimation for $j=1, \ldots, n-1$.

To get the assertion it is thus enough to get an estimate on $z_n$. Fix $z_1,\ldots, z_{n-1}$ and denote $\varphi(\lambda) = f(z_1, \ldots, z_{n-1},\lambda)$. Let $U_n$ be the projection of $U$ onto the $z_n$-plane. Take any $w_n\in \DD\cap U_n$ such that $ |\varphi(w_n)-1|< \delta $ (if there is no such $w_n$, the estimate we want to achieve is trivial). Clearly, it is enough to estimate the volume of a bigger set $\{\lambda\in U_n\cap \DD:\ |\varphi(\lambda) -\varphi(w_n)|< 2\delta\}$ which, in turn, can be estimated from above by the volume of the disc $\{\lambda:\ |\lambda - w_n|\leq \delta\}$, as the derivative $\varphi'(w_n)$ does not vanish (providing that $U$ is sufficiently small). This gives us an upper bound $\delta^2$ and the proof is finished.
\end{proof}

As we shall observe later (see Proposition~\ref{prop:beg}) the upper bound that we have just achieved is sharp.

\begin{lemma}[Lower bound]\label{lem:lb}
	Let $f:\DD^n\to \DD$ be holomorphic, $\mathcal C^2$-smooth on $\overline \DD^n$, and such that $f(\zeta)=\eta$ for some $\zeta\in \TT^n$ and $\eta\in \TT$.  Then 
	$$\vol(\{z\in \DD^n:\ |f(z) - \eta|\leq \delta\})\succeq \delta^{(3n+1)/2}.$$
\end{lemma}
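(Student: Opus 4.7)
The plan is to produce an explicit subset of $\{z\in\DD^n: |f(z)-\eta|\le \delta\}$ whose volume is $\simeq \delta^{(3n+1)/2}$. As in the proof of Lemma~\ref{lem:est}, we reduce to the case $\zeta=(1,\ldots,1)$ and $\eta=1$ by rotation in each coordinate. By the multidimensional Julia–Carath\'eodory theorem (Bayart's Lemma 2.5/Cor.\ 2.7), the angular derivatives $c_j := \partial f/\partial z_j(\zeta)$ are real and non-negative. If some $c_j=0$, then the slice $z_j \mapsto f(1,\ldots,z_j,\ldots,1)$ is holomorphic on $\DD$ (the observation quoted from the introduction), it sends $1$ to $1$, and its derivative at $1$ vanishes, so by single-variable Julia–Carath\'eodory it must be constantly equal to $1$. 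Iterating, $f$ is constantly $\eta$ on a subtorus through $\zeta$, which only yields a larger set of points where $|f-\eta|$ is small; hence we may assume $c_j>0$ for every $j$.

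Introduce local coordinates $h_j := 1 - z_j = s_j + i t_j$. The condition $z_j\in\DD$ becomes $2 s_j > s_j^2 + t_j^2$, so for $|h_j|$ small it suffices that $s_j>0$ and $t_j^2 \le s_j$. By $\mathcal C^2$-smoothness,
\[
1 - f(z) \;=\; \sum_{j=1}^n c_j h_j \;+\; R(h), \qquad |R(h)|\le M |h|^2,
\]
for $h$ in some neighbourhood of the origin. Define $E$ as the set of $h$ satisfying
\[
\tfrac{\delta}{8n c_j} \;\le\; s_j \;\le\; \tfrac{\delta}{4n c_j}, \qquad |t_j|\le \sqrt{s_j}, \qquad \Bigl|\sum_{j=1}^n c_j t_j\Bigr|\le \tfrac{\delta}{4}.
\]
On $E$ we have $|h_j|^2 \le s_j^2 + s_j \lesssim \delta$, hence $|R(h)|\le M|h|^2 \le \tfrac{\delta}{4}$ once $\delta$ is small. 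Moreover $\operatorname{Re}\bigl(\sum c_j h_j\bigr) = \sum c_j s_j \in [\delta/8,\delta/4]$ and $|\operatorname{Im}(\sum c_j h_j)|\le \delta/4$, so $|\sum c_j h_j|\le \delta/\sqrt{8}$. Combining, $|1-f(z)|\le \delta$ on $E$.

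It remains to estimate $\vol(E)$ from below. The $(s_1,\ldots,s_n)$-slab is a product of intervals of length $\simeq \delta$ (up to the factor $\prod c_j^{-1}$), contributing volume $\simeq \delta^n$. For any fixed admissible $s$, the set of $(t_1,\ldots,t_n)$ under consideration is the intersection of the cube $\prod [-\sqrt{s_j},\sqrt{s_j}]$ (of sides $\simeq \sqrt{\delta}$) with the slab $\{|\sum c_j t_j|\le \delta/4\}$. Since the hyperplane $\sum c_j t_j = 0$ passes through the centre of this cube, it intersects the cube in an $(n-1)$-dimensional region of area $\simeq \delta^{(n-1)/2}$; the slab has orthogonal width $\simeq \delta$, giving a slice volume $\simeq \delta \cdot \delta^{(n-1)/2} = \delta^{(n+1)/2}$. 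Multiplying the two contributions yields $\vol(E)\succeq \delta^n \cdot \delta^{(n+1)/2} = \delta^{(3n+1)/2}$, as required.

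The only genuinely delicate step is verifying the slab--cube intersection bound uniformly in $s$; everything else is bookkeeping in the Taylor expansion and an application of Julia–Carath\'eodory to discard degenerate directions. Note that the balance $s_j\simeq \delta$ and $|t_j|\le \sqrt{s_j}$ is forced by the geometry of $\DD^n$ near a distinguished boundary point, and it is precisely this mismatch between the real and imaginary scales that produces the exponent $(3n+1)/2$ rather than the naive $2n$.
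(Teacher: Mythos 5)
Your core construction is correct and is a genuinely different route from the paper: in the non-degenerate case ($c_j>0$ for all $j$) you exhibit an explicit subset $E$ of the sublevel set, splitting $1-z_j$ into real and imaginary parts at the scales $s_j\simeq\delta$, $|t_j|\preceq\sqrt{\delta}$ with the extra slab condition $|\sum_j c_j t_j|\leq\delta/4$, and the cube--slab intersection bound you invoke is easily made rigorous (e.g.\ write the central hyperplane section as a graph $t_n=-\sum_{j<n}c_jt_j/c_n$ over a slightly shrunken $(n-1)$-cube and thicken it in the normal direction). The paper instead reduces the problem to the \emph{linear} sublevel set $\{|\sum_{j\leq k}\alpha_j(z_j-1)|\leq\delta\}$ via a Julia--Carath\'eodory type expansion of Bayart (Corollary~2.7 of \cite{Bay}), whose remainder involves only the variables with $\alpha_j>0$, and then quotes Bayart's volume computation; your argument is more self-contained but uses plain $\mathcal C^2$ Taylor expansion, whose remainder $O(|h|^2)$ involves \emph{all} variables --- and this is exactly where your treatment of the degenerate case breaks down.

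The genuine gap is the sentence ``Iterating, $f$ is constantly $\eta$ on a subtorus through $\zeta$, which only yields a larger set \dots hence we may assume $c_j>0$.'' First, the iteration itself needs an argument: constancy of the slice in $z_j$ through $\zeta$ does not by itself give vanishing of $\partial f/\partial z_j$ at \emph{other} points of the slice $\{1\}^k\times\overline\DD^{\,n-k}$; you need something like the paper's Lemma~\ref{lem:obs} (the gradient is constant along such a slice). Second, and more seriously, ``the set is larger'' is not a quantitative reduction. The obvious quantification --- $f\equiv\eta$ on the slice plus $\mathcal C^1$-smoothness gives $\{z\in\DD^n:\ |z_j-1|\preceq\delta,\ j\leq k\}\subset\{|f-\eta|\leq\delta\}$, of volume $\simeq\delta^{2k}$ --- does \emph{not} reach $\delta^{(3n+1)/2}$ once $4k>3n+1$ (e.g.\ one degenerate direction, $k=n-1$, $n\geq 6$), so the degenerate case is not subsumed by the non-degenerate one as stated. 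The fix stays inside your framework: keep your set $E$ in the $k$ non-degenerate variables and confine each degenerate variable to $\{z_j\in\DD:\ |1-z_j|\leq\sqrt{\delta}\}$ (area $\simeq\delta$), which keeps the Taylor remainder $\preceq\delta$ since those variables no longer appear in the linear term; the resulting volume is $\succeq\delta^{(3k+1)/2}\,\delta^{\,n-k}=\delta^{(2n+k+1)/2}\succeq\delta^{(3n+1)/2}$. (Alternatively, use gradient constancy along the slice and apply your $k$-variable construction to $f(\cdot,z'')$ uniformly in $z''$, then integrate in $z''$.) Without one of these supplements the proof is incomplete.
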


\begin{proof} We lose no generality assuming that $\zeta=(1,\ldots, 1)$ and $\eta=1.$
The proof is a consequence of the following:
\begin{claim}\label{claim:up} Under the assumption of Lemma~\ref{lem:lb} there is a neighbourhood $U$ of $\zeta=(1,\ldots, 1)$ and a constant $C>0$ such that 
	\begin{equation*}\label{eq:f}|f(z)-1| \leq C\left| \frac{\partial f}{\partial z_1}(\zeta) (z_1 -1) + \ldots + \frac{\partial f}{\partial z_n}(\zeta) (z_n-1) \right|\text{ for $z\in \DD^n\cap U$}.
	\end{equation*}
\end{claim}
\begin{proof}[Proof of the Claim]
As we shall see the above inequality is a direct consequence of a multidimensional Julia-Carath\'eodory type lemma in the shape that was formulated by Bayart (Corollary~2.7, \cite{Bay}). We shall explain how to use it. One needs to show that there is a constant $C>0$ independent of $t>0$ such that if $\left| \frac{\partial f}{\partial z_1}(\zeta) (z_1 -1) + \ldots + \frac{\partial f}{\partial z_n}(\zeta) (z_n-1) \right|\leq t$, then $|f(z)-1|\leq C t$. 
	
It follows from the Julia-Carath\'eodory theorem that $\alpha_j:=\frac{\partial f}{\partial z_j}(\zeta)\geq 0$. Permuting the variables one can assume that $\alpha_j$ is strictly positive for $j=1, \ldots, k$, where $k\leq n$,  and $\alpha_j=0$ if $j>k$. The crucial part due to Bayart says that $$f(z) -1 = \sum_{j=1}^k \alpha_j (z_j-1) + \sum_{j=1}^k O(|z_j -1|^2).$$ Clearly, $|\alpha_1 (z_1-1) + \ldots + \alpha_k (z_k-1)|\leq t$ implies that $\Re (1-z_j)<D t$, where $D$ is independent of $t$. The last fact, together with $|z_j|<1$, implies that $|z_j-1|\leq D' \sqrt t$, where again, a constant $D'$ does not depend on $t$. This observation finishes the proof.
\end{proof}
To finish the proof of Lemma~\ref{lem:lb} it is enough to estimate the volume of the set $\{z\in \DD^n:\ |\alpha_1 (z_1-1) + \ldots + \alpha_k (z_k-1)|\leq \delta\}$. That was done by Bayart \cite{Bay} (see also Proposition~\ref{prop:beg} in the current paper) --- this volume behaves like $\delta^{(3k+1)/2}\succeq \delta^{(3n+1)/2}$.
\end{proof}

The estimates obtained above cannot be in general improved:

\begin{proposition}\label{prop:beg} Let $f,g:\mathbb D^n\to \mathbb D$ be given by the formulas $f(z)=z_1\cdots z_n$, $g(z_1, \ldots, z_n) = \alpha_1 z^n_1+\ldots + \alpha_n z^n_n$, where $\sum_j |\alpha_j|=1$, and $\alpha_j\neq 0$ for $j=1,\ldots, n$. Then for any $\eta \in \TT$: 
$$V_\beta (\{z\in \DD^n:\ |f(z)-\eta|\leq \delta\})\simeq \delta^{n(\beta + 1)+1},$$ while $$V_\beta(\{z \in \DD^n:\ |g(z)-\eta|\leq \delta\})\simeq \delta^{n(\beta +1) + (n+1)/2}.$$	
\end{proposition}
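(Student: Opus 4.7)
By the diagonal $\TT^n$-invariance of $V_\beta$, together with the equivariance of both $f$ and $g$ under such rotations, I may assume $\eta = 1$ throughout (for $g$, each $\alpha_j$ absorbs a phase, preserving $|\alpha_j|$). For $g$ I further note that $\{\zeta \in \TT^n : g(\zeta) = 1\}$ is the finite set of tuples with $\alpha_j \zeta_j^n = |\alpha_j|$, and after localizing near one such $\zeta$ and a final coordinate rotation I may take $\zeta = (1,\ldots,1)$ with $\alpha_j > 0$ and $\sum \alpha_j = 1$. By additivity over these finitely many boundary points, it then suffices to estimate each local contribution.

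For $f = z_1\cdots z_n$, write $z_j = r_j e^{i\theta_j}$ and set $t = 1 - r_1\cdots r_n$, $\theta = \theta_1 + \cdots + \theta_n$. A short trigonometric computation gives
\[
|f(z)-1|^2 = t^2 + 2(1-t)(1-\cos\theta) \simeq t^2 + \theta^2
\]
for $t,\theta$ small. Since $|f| \geq 1-\delta$ forces each $r_j \geq 1-\delta$, the sublevel set is, up to constants, $\{r_j \in [1-C\delta,1]\} \cap \{\theta_1+\cdots+\theta_n \in [-C\delta,C\delta] \pmod{2\pi}\}$. Substituting $u_j = 1 - r_j$ turns the weighted radial integral into the Dirichlet-type expression $\int_{u_j \geq 0,\ \sum u_j \leq C\delta}\prod u_j^\beta\,du \simeq \delta^{n(\beta+1)}$, while the $\theta$-slab has Lebesgue measure $\simeq \delta$ on the torus. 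Multiplying yields $\delta^{n(\beta+1)+1}$.

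For $g = \sum\alpha_j z_j^n$, Taylor expansion near $(1,\ldots,1)$ gives $g(z) - 1 = n\sum_j \alpha_j(z_j - 1) + O(|z-\zeta|^2)$, and the Julia--Carath\'eodory inequality $|z_j - 1|^2 \leq 2\Re(1-z_j) \leq 2|g(z)-1|/\alpha_j$ (the same estimate that underlies Claim~\ref{claim:up}) shows that the quadratic remainder is $O(|g-1|)$. Consequently the sublevel set $\{|g-1| \leq \delta\}$ is comparable, in both inclusions, to the linear sublevel set $\{|\sum_j \alpha_j(z_j - 1)| \leq C\delta\}$. To evaluate the latter, put $w_j = 1 - z_j = x_j + iy_j$: the disc constraint becomes $y_j^2 \leq 2x_j - x_j^2$, the weight satisfies $(1-|z_j|^2)^\beta \simeq (2x_j - y_j^2)^\beta$, and the complex linear condition splits as $\sum \alpha_j x_j \leq C\delta$ together with $|\sum\alpha_j y_j| \leq C\delta$. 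The decisive step is the anisotropic rescaling $x_j = \delta u_j$, $y_j = \sqrt\delta\,v_j$: the Jacobian contributes $\delta^{3n/2}$, the weights $\delta^{n\beta}$, the $u$-constraint cuts out a bounded simplex, the parabolic condition $v_j^2 \leq 2u_j$ bounds the $v$'s, and the remaining linear slab $|\sum\alpha_j v_j| \leq \sqrt\delta$ contributes a factor $\sqrt\delta$. Since $\int \prod(2u_j - v_j^2)^\beta$ is finite and positive on this bounded region for $\beta > -1$, the product is $\delta^{n(\beta+3/2)+1/2} = \delta^{n(\beta+1)+(n+1)/2}$.

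The main obstacle is recognizing the correct anisotropic scaling for $g$: the parabolic boundary geometry $|z_j-1|^2 \lesssim \Re(1-z_j)$ forces the tangential direction to scale like $\sqrt\delta$ while the normal direction scales like $\delta$, so that the radial weight $\delta^\beta$ and the Jacobian factor $\delta^{3/2}$ per variable combine with the one lost half-power from the single linear slab to give the fractional exponent $(n+1)/2$. Once this is in place, both the $\preceq$ and $\succeq$ bounds follow immediately because none of the dominating terms degenerate; by contrast, the $f$ case is essentially a product, and the tangential constraint $\theta_1+\cdots+\theta_n \approx 0$ is one-dimensional, producing the integer exponent $n(\beta+1)+1$.
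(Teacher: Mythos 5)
Your proof is correct, but it takes a more self-contained route than the paper, whose own argument is only a few lines: for $g$ the paper simply invokes Bayart's Example~4.1 of \cite{Bay} (the volume estimate for the linear symbol $\sum_j\alpha_jw_j$) together with the change of variables $w_j=z_j^n$, while for $f$ it estimates the projections onto the $z_2,\dots,z_n$-planes (annuli of width $\simeq\delta$, each of $A_\beta$-measure $\simeq\delta^{\beta+1}$) and notes that each $z_1$-slice is a disc of radius $\simeq\delta$ touching $\TT$, of measure $\simeq\delta^{\beta+2}$. Your treatment of $f$ via polar coordinates and the factorization $|f-1|^2\simeq t^2+\theta^2$ is the same computation in different coordinates, and it handles the two-sided bound cleanly. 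Your treatment of $g$ genuinely differs: instead of reducing to Bayart's linear example, you locate the finitely many points of $\TT^n$ where $|g|=1$, linearize $g$ there using Taylor expansion plus the Julia--Carath\'eodory-type bound $|z_j-1|^2\preceq\Re(1-z_j)\preceq|g(z)-1|$ (the same mechanism as Claim~\ref{claim:up}), and then compute the volume of the linear sublevel set by the anisotropic scaling $x_j\sim\delta$, $y_j\sim\sqrt\delta$; the exponent bookkeeping ($\delta^{3/2}$ Jacobian and $\delta^{\beta}$ weight per variable, one extra $\sqrt\delta$ from the tangential slab) is exactly right and reproduces what the paper outsources to \cite{Bay}. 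Two small imprecisions are worth tightening, though neither is a gap: the inequality $\Re(1-z_j)\le 2|g(z)-1|/\alpha_j$ holds as stated only with $\Re(1-z_j^n)$ in place of $\Re(1-z_j)$, so you need the local comparison $\Re(1-z_j)\preceq\Re(1-z_j^n)$ near $z_j=1$, which is valid after your localization; and $(1-|z_j|^2)^\beta=(2x_j-x_j^2-y_j^2)^\beta$ is not pointwise comparable to $(2x_j-y_j^2)^\beta$ near the boundary circle (the dropped $x_j^2$ matters there, especially for $\beta<0$), but after your rescaling the inner $v_j$-integration shows the integrals are comparable uniformly in $\delta$, so the stated exponents are unaffected.
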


\begin{proof}
	The second estimate is essentially due to Bayart (see \cite{Bay}, Example~4.1), just a standard change of variables is additionally needed. Let us sketch the first one. We can assume that $\eta=1$. Clearly, $|1-z_1\cdots z_n|\leq \delta$ implies that $|z_1\cdots z_n|\geq 1- \delta$ which is somehow equivalent to $|z_j|\succeq 1-\delta$, $j=1,\ldots, n$. In this way we estimate projections on the $z_2$-$,\ldots, z_n$-planes and the estimation for $z_1$ follows from the fact that $\{z_1:\ |1-z_1\cdots z_n|\leq \delta\}$ is a disc.
\end{proof}

The bounds derived above show the following simple result that we will make use of in the sequel:

\begin{corollary}\label{cor:sharp}
Let $\Phi(\varphi):=(\varphi,\ldots, \varphi,0):\DD^n\to \DD^n$, where $\varphi$ is holomorphic on $\DD^n$. Let $f,g$ be as in Proposition~\ref{prop:beg}. Then for $\varphi=f$ the composition operator $C_{\Phi(f)}$ is bounded on $A_\beta^2(\DD^n)$ precisely when $n\leq \beta +3$.

On the other hand, for $\varphi=g$ the operator $C_{\Phi(g)}$ is bounded on $A_\beta^2(\DD^n)$ precisely when $n/2\leq \beta +5/2$.
\end{corollary}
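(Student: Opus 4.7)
\emph{Proof proposal.} The plan is to reduce the boundedness of $C_{\Phi(\varphi)}$ on $A_\beta^2(\DD^n)$ to a Carleson-box estimate for an auxiliary diagonal composition in one fewer variable. Since the last component of $\Phi(\varphi)$ is identically $0$, for every $F\in A_\beta^2(\DD^n)$ one has
\[
C_{\Phi(\varphi)}F(z)=F_0(\varphi(z),\ldots,\varphi(z)),\qquad F_0(w_1,\ldots,w_{n-1}):=F(w_1,\ldots,w_{n-1},0).
\]
The orthogonal decomposition $F=F_0+w_n\tilde F$ in $A_\beta^2(\DD^n)$ gives $\|F_0\|_{A_\beta^2(\DD^{n-1})}\leq\|F\|_{A_\beta^2(\DD^n)}$, with equality when $F$ is independent of $w_n$; conversely, every $G\in A_\beta^2(\DD^{n-1})$ lifts to $F(w_1,\ldots,w_n):=G(w_1,\ldots,w_{n-1})$ with $\|F\|=\|G\|$. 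Hence $C_{\Phi(\varphi)}$ is bounded on $A_\beta^2(\DD^n)$ if and only if the composition $G\mapsto G\circ\Psi$ from $A_\beta^2(\DD^{n-1})$ to $A_\beta^2(\DD^n)$ is bounded, where $\Psi:=(\varphi,\ldots,\varphi):\DD^n\to\DD^{n-1}$.

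By the Carleson measure characterization on $\DD^{n-1}$ (the analogue of \eqref{eq:box}) applied to $\mu_\Psi(E):=V_\beta(\Psi^{-1}(E))$, the latter boundedness is equivalent to
\[
V_\beta\bigl(\Psi^{-1}(S(\xi,\delta))\bigr)\preceq\prod_{j=1}^{n-1}\delta_j^{\beta+2},\qquad \xi\in\TT^{n-1},\ \delta\in(0,1)^{n-1}.
\]
For necessity I would take $\xi=(\eta,\ldots,\eta)$ with $\eta\in\TT$ and $\delta_j=\delta$, so that $\Psi^{-1}(S(\xi,\delta))=\{z:\,|\varphi(z)-\eta|<\delta\}$. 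Proposition~\ref{prop:beg} gives its $V_\beta$-volume as $\simeq\delta^{n(\beta+1)+1}$ for $\varphi=f$ and $\simeq\delta^{n(\beta+1)+(n+1)/2}$ for $\varphi=g$; demanding each dominate $\delta^{(n-1)(\beta+2)}$ collapses to the claimed thresholds $n\leq\beta+3$ and $n/2\leq\beta+5/2$, respectively.

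For sufficiency, let $j_0$ minimize $\delta_j$. Then $\Psi^{-1}(S(\xi,\delta))\subseteq\{z:\,|\varphi(z)-\xi_{j_0}|<\delta_{j_0}\}$, whose $V_\beta$-volume Proposition~\ref{prop:beg} bounds by a constant times $\delta_{j_0}^{n(\beta+1)+1}$ (resp.\ $\delta_{j_0}^{n(\beta+1)+(n+1)/2}$). Since every $\delta_j\geq\delta_{j_0}$ and $\beta+2>0$, one has $\prod_j\delta_j^{\beta+2}\geq\delta_{j_0}^{(n-1)(\beta+2)}$, and the threshold $n\leq\beta+3$ (resp.\ $n/2\leq\beta+5/2$) combined with $\delta_{j_0}<1$ yields $\delta_{j_0}^{n(\beta+1)+1}\leq\delta_{j_0}^{(n-1)(\beta+2)}$, closing the estimate.

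The main obstacle is the reduction step: noticing that the trailing $0$ in $\Phi(\varphi)$ kills all $w_n$-dependence and converts the problem on $A_\beta^2(\DD^n)$ into a Carleson-box estimate on $\DD^{n-1}$ for the pullback of $V_\beta$ under the diagonal map $\Psi$. Once that equivalence is in place, Proposition~\ref{prop:beg} closes both directions almost mechanically.
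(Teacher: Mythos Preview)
Your proof is correct and follows essentially the same route as the paper. Both arguments reduce the Carleson-box condition \eqref{eq:box} for $\Phi(\varphi)$ to the single scalar estimate
\[
V_\beta\bigl(\{z\in\DD^n:\ |\varphi(z)-\eta|<\delta\}\bigr)\preceq \delta^{(n-1)(\beta+2)},
\]
and then invoke Proposition~\ref{prop:beg} to translate this into the stated inequalities on $n$ and $\beta$. The paper does this in one line by declaring ``we can assume $\delta_n=1$'' and implicitly taking a common center $\zeta$; you make the same reduction explicit by factoring through the map $\Psi=(\varphi,\ldots,\varphi):\DD^n\to\DD^{n-1}$ and the restriction $F\mapsto F_0$, and in the sufficiency direction you handle possibly distinct centers $\xi_j$ by passing to the smallest $\delta_{j_0}$. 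This is a tidier formalization of exactly the paper's argument, not a genuinely different strategy.
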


\begin{proof}
Estimating $V_\beta(\Phi(f)^{-1}(S(\zeta, \delta))$ we can assume that $\delta_n=1$, that is $\delta=(\delta_1,\ldots, \delta_{n-1}, 1)$. What we need to do is to check when the inequality $V_\beta(\{z:\ |f(z)-\zeta|<\delta_i,\ i=1, \ldots, n-1\}) \preceq \delta_1^{2+\beta} \cdots \delta_{n-1}^{2+ \beta}$ is fulfilled for any $\delta_1, \ldots , \delta_{n-1}>0$, which is equivalent to checking if $V_\beta(\{z:\ |f(z)-\zeta|<\delta_1\}) \preceq \delta_1^{(2+\beta)(n-1)}$ for $\delta_1>0$. This, due to Proposition~\ref{prop:beg}, boils down to a trivial inequality $\delta_1^{n(\beta+1)+1} \preceq \delta_1^{(2+\beta)(n-1)}$. 

The proof for $g$ is exactly the same.
\end{proof}

\subsection{Rank-sufficiency theorem and linear-necessity conditions}

\begin{theorem}\label{thm:3.2}
	Suppose that $\Phi:\mathbb D^n\to \DD^n$ is holomorphic and $\mathcal C^1$ smooth on $\overline{\mathbb D}^n$ and such that for any $q$ and any $|I|=q$:
	
	$$(\dag)\quad  \forall \zeta\in \TT^n \text{ such that }\Phi_I(\zeta)\in \TT^q:\quad  \rank d_\zeta \Phi_I=q.$$
	Then for any $\beta>-1$ the composition operator $C_\Phi$ maps $A_\beta^2$ as well as $H^2$ continuously into itself.
\end{theorem}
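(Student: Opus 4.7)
The plan is to verify the Carleson box criterion \eqref{eq:box}: find a constant $C_\beta$ with
$$V_\beta(\Phi^{-1}(S(\xi,\delta)))\leq C_\beta\, V_\beta(S(\xi,\delta)) = C_\beta \prod_{j=1}^n\delta_j^{\beta+2},$$
uniformly in $\xi\in\TT^n$, $\delta\in(0,1]^n$; for the $H^2$ conclusion I also track that $C_\beta$ can be chosen uniformly in $\beta\in(-1,0)$, so as to invoke Bayart's Proposition~9.3. By compactness, cover $\overline{\DD}^n$ by finitely many open sets $U_\zeta$ indexed by $\zeta\in\overline{\DD}^n$, and reduce to proving the estimate on each $\Phi^{-1}(S(\xi,\delta))\cap U_\zeta$ separately. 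Setting $I=I(\zeta):=\{j:\Phi_j(\zeta)\in\TT\}$ and $q=|I|$, for $j\notin I$ the modulus $|\Phi_j|$ stays below $1-c$ on a sufficiently small $U_\zeta$, so non-emptiness of the intersection forces $\delta_j\geq c$; hence the factors $\delta_j^{\beta+2}$ for $j\notin I$ are $\simeq 1$ uniformly in $\beta$ on compact subsets of $(-1,\infty)$, and the target reduces to
$$V_\beta(\Phi^{-1}(S(\xi,\delta))\cap U_\zeta)\preceq \prod_{j\in I}\delta_j^{\beta+2}. \quad(\star)$$

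After permuting and rotating coordinates I assume $\zeta=(1,\ldots,1)\in\TT^n$ and $\Phi_j(\zeta)=1$ for $j\in I$ (points $\zeta\in\partial\DD^n\setminus\TT^n$ are treated by first noting, via the Montel-plus-maximum-principle remark from the introduction, that $\Phi_j|_{z_K=\zeta_K}$ is constant in the non-torus directions $K^c:=\{j:|\zeta_j|<1\}$ for $j\in I$; one then integrates out $z_{K^c}$ and is left with a lower-dimensional problem of the same form). Hypothesis $(\dag)$ gives $\rank d_\zeta\Phi_I=q$, so there is $J\subset\{1,\ldots,n\}$ with $|J|=q$ for which $A_J:=\left(\partial\Phi_j/\partial z_k(\zeta)\right)_{j\in I,\,k\in J}$ is invertible. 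Shrinking $U_\zeta$, the implicit function theorem makes
$$\Psi:U_\zeta\to\CC^n,\quad \Psi(z):=(\Phi_I(z),\, z_{J^c})$$
a $\mathcal C^1$-diffeomorphism onto its image, with Jacobian uniformly comparable to $|\det A_J|^2>0$.

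Changing variables $w=\Psi(z)$ in $(\star)$ and absorbing the bounded, $\beta$-uniform contribution from integrating in $w_{J^c}$ over a compactly contained subset of $\DD^{n-q}$, the target reduces to
$$\int_{|w_j-\xi_j|<\delta_j,\,j\in I}\,\prod_{j\in J}(1-|z_j(w)|^2)^\beta\,dA(w_I)\preceq \prod_{j\in I}\delta_j^{\beta+2}.$$
To prove this I combine two ingredients. First, Bayart's multidimensional Julia-Carath\'eodory theorem (Corollary~2.7 of \cite{Bay}) gives the expansion $\Phi_j(z)-1 = \sum_k\alpha_{jk}(z_k-1) + \sum_k O(|z_k-1|^2)$ for $j\in I$, with Julia-type positivity $\alpha_{jk}\geq 0$. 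Inverting this $q\times q$ linear system on the $J$-coordinates through $A_J^{-1}$ expresses each $(1-z_k)$ for $k\in J$ as a $\CC$-linear combination of the $(1-w_j)$ for $j\in I$, up to lower-order corrections. Second, $(1-|z|^2)\leq 2|1-z|$ converts this into a pointwise bound $\prod_{j\in J}(1-|z_j(w)|^2)\preceq \prod_{j\in I}|1-w_j|$ on the relevant region; integration then yields the desired $\prod_{j\in I}\delta_j^{\beta+2}$.

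The main obstacle is precisely this weight comparison at the pointwise level: the inequality $(1-|z|^2)\leq 2|1-z|$ is one-sided, and for $-1<\beta<0$ the weight $(1-|z|^2)^\beta$ could conceivably blow up faster than $|1-z|^\beta$ when $z$ approaches $\TT$ tangentially. The resolution is that $\Psi$ is not an arbitrary smooth coordinate change but one built from the holomorphic $\Phi_I$, so one-dimensional Schwarz--Pick/Julia inequalities applied slice-by-slice, together with the rank hypothesis, ensure $(1-|z_k|^2)\simeq (1-|w_{j'}|^2)$ for appropriate pairings along the Kor\'anyi-like approach region $\{z:|\Phi_j(z)-\xi_j|<\delta_j,\,j\in I\}$. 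Tracking the constants throughout shows they depend only on $\Phi$ and its $\mathcal C^1$-norm on $\overline{\DD}^n$, not on $\beta$, yielding uniformity in $\beta\in(-1,0)$ and hence, via Proposition~9.3 of \cite{Bay}, boundedness on $H^2(\DD^n)$ as well.
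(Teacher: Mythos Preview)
Your localization and reduction to a full-rank $q\times q$ minor is exactly the paper's setup (its Lemma~\ref{lem:3.5}), and your change-of-variables plus Schwarz--Pick argument is essentially what the paper does for $\beta\geq 0$ (its Claim~\ref{claim:SL}). The gap is the case $-1<\beta<0$, and you have correctly identified the obstacle but not resolved it.

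Concretely: Schwarz--Pick applied slice-by-slice yields $\prod_{k\in J}(1-|z_k|^2)\leq C\prod_{j\in I}(1-|\Phi_j(z)|^2)$, which for $\beta\geq 0$ is the right direction. For $\beta<0$ you would need the \emph{reverse} inequality, i.e.\ a pointwise lower bound on $1-|z_k|^2$ in terms of the $1-|w_j|^2$, and no Julia or Schwarz--Pick inequality gives that: when $z_k$ approaches $\TT$ tangentially, $1-|z_k|^2$ can be arbitrarily small compared with $|1-z_k|$, and the biholomorphic change $w_I=\Phi_I(z_J,z_{J^c})$ does not convert tangential approach in $z$ into tangential approach in $w$. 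Your claimed pairing $(1-|z_k|^2)\simeq(1-|w_{j'}|^2)$ is therefore not justified, and the integral on the left of your displayed target can genuinely fail to be controlled by $\prod|1-w_j|^\beta$ pointwise. (A side issue: since you took $\zeta\in\TT^n$, the variables $w_{J^c}=z_{J^c}$ do \emph{not} range over a compact subset of $\DD^{n-q}$; the $J^c$-integration is still harmless because $A_\beta(\DD)=1$, but not for the reason you give.)

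The paper's remedy for $\beta\in(-1,0)$ is to avoid the change of variables altogether. It proves a purely real-variable lemma (Lemma~\ref{lem:de}): if $\psi$ is a $\mathcal C^1$ local diffeomorphism, then after ordering $\delta_{i_1}\geq\cdots\geq\delta_{i_q}$ and choosing a permutation $\sigma$ via successive nonvanishing minors, the set $\{|\psi_i|<\delta_i\}$ is contained in a ``skew box''
\[
\{|z_{\sigma(1)}-z'_{\sigma(1)}|<c_1\delta_{i_1},\ |z_{\sigma(2)}-z'_{\sigma(2)}(z_{\sigma(1)})|<c_2\delta_{i_2},\ \ldots\}
\]
in the \emph{original} $z$-coordinates. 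One then integrates $dA_\beta$ slice-by-slice using the elementary one-variable estimate $A_\beta(\DD(a,\delta)\cap\DD)\leq C\delta^{\beta+2}$ with $C$ independent of $a$ and of $\beta\in(-1,0]$ (Lemma~\ref{lem:ad}). This sidesteps the weight comparison entirely, and the uniformity in $\beta$ needed for the $H^2$ conclusion is built into these two lemmas.
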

We shall postpone the proof of Theorem~\ref{thm:3.2} to the end of the paper. Note that it provides us with a simple criterion for the weighted Bergman and Hardy spaces over the bidisc.

\begin{theorem}\label{th:bidisc}
Let $\phi\in \mathcal O(\DD^2, \DD^2)\cap \mathcal C^2 (\overline\DD^2)$, $\beta>-1$. Then $C_\phi$ is bounded on $A_\beta(\DD^2)$ (resp. on $H^2(\DD^2)$) if and only if $d_\zeta \phi$ is invertible for all $\zeta\in \TT^2$ such that $\phi(\zeta)\in \TT^2$.
\end{theorem}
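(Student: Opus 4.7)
The proof splits into two directions. For sufficiency, I would invoke the rank-sufficiency Theorem~\ref{thm:3.2}, so the task is to verify condition $(\dag)$ in dimension $n=2$. Only the cases $q=1,2$ arise. When $q=2$ (so $I=\{1,2\}$ and $\Phi_I=\phi$), the required assertion $\rank d_\zeta\phi=2$ at every $\zeta\in\TT^2$ with $\phi(\zeta)\in\TT^2$ is literally the hypothesis of the theorem. When $q=1$ (so $I=\{i\}$ and $\Phi_I=\phi_i$), I must show $d_\zeta\phi_i\neq 0$ whenever $\zeta\in\TT^2$ satisfies $\phi_i(\zeta)\in\TT$. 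For this I would apply the multidimensional Julia--Carath\'eodory expansion of Bayart (Corollary~2.7 in \cite{Bay})---the same tool powering the Claim inside Lemma~\ref{lem:lb}---to the scalar self-map $\phi_i\in\mathcal O(\DD^2,\DD)$. If every partial $\partial\phi_i/\partial z_j(\zeta)$ vanishes, that expansion collapses, forcing $\phi_i\equiv\eta_i$ on a neighbourhood of $\zeta$, hence on all of $\DD^2$ by the identity principle. But $\eta_i\in\TT$ while $|\phi_i|<1$ on $\DD^2$, a contradiction; therefore $d_\zeta\phi_i\neq 0$ and $(\dag)$ is verified.

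For necessity I argue by contradiction: assume $C_\phi$ is bounded on $A_\beta^2(\DD^2)$ but $d_\zeta\phi$ is singular at some $\zeta\in\TT^2$ with $\phi(\zeta)\in\TT^2$, normalised to $\zeta=\phi(\zeta)=(1,1)$. Writing $\alpha_{ij}=\partial\phi_i/\partial z_j(\zeta)$, the classical Julia--Carath\'eodory theorem gives $\alpha_{ij}\geq 0$. The case $\rank d_\zeta\phi=0$ reduces via one component to the sufficiency argument and is immediate. Otherwise $\rank d_\zeta\phi=1$, and the two rows are proportional: $(\alpha_{21},\alpha_{22})=\lambda(\alpha_{11},\alpha_{12})$ with $\lambda>0$. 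Bayart's expansion then gives
\[
\phi_1(z)-1=L(z)+\sum_{j=1}^{2}O(|z_j-1|^2),\qquad \phi_2(z)-1=\lambda L(z)+\sum_{j=1}^{2}O(|z_j-1|^2),
\]
where $L(z)=\alpha_{11}(z_1-1)+\alpha_{12}(z_2-1)$. Reasoning as in the Claim inside Lemma~\ref{lem:lb} (the estimate $|z_j-1|\preceq\sqrt\delta$ absorbs the quadratic errors), the preimage $\phi^{-1}(S((1,1),(\delta,\delta)))$ contains, near $\zeta$, a set of the form $\{z\in\DD^2:|L(z)|<c\delta\}$. The linear sublevel-set estimate (Proposition~\ref{prop:beg}, or the linear variant in \cite{Bay}, Example~4.1) assigns to this set $V_\beta$-measure $\succeq\delta^{2\beta+7/2}$, whereas $V_\beta(S((1,1),(\delta,\delta)))\simeq\delta^{2\beta+4}$. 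Since $\delta^{2\beta+7/2}/\delta^{2\beta+4}=\delta^{-1/2}\to\infty$ as $\delta\to 0^+$, the Carleson characterisation~\eqref{eq:box} is violated. The Hardy analogue follows by \cite{Bay}, Proposition~9.3, since the constants are uniform in $\beta\in(-1,0)$.

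The step I expect to be the main obstacle is the $q=1$ part of the sufficiency verification: the hypothesis only controls $d_\zeta\phi$ at points $\zeta\in\TT^2$ where \emph{both} components $\phi_i(\zeta)$ are unimodular, while $(\dag)$ demands rank information at points where only one component reaches $\TT$. Bayart's multidimensional Julia--Carath\'eodory formula, combined with the strict interior bound $|\phi_i|<1$ on $\DD^2$, is precisely what bridges this gap.
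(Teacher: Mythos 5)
Your sufficiency half is the same as the paper's: the paper simply declares Theorem~\ref{thm:3.2} applicable, and your verification of $(\dag)$ for $q=1$ fills in precisely the point the paper treats as automatic (compare property \emph{3.}\ in the proof of Theorem~\ref{th:tridisc}, where it is attributed to the Julia--Carath\'eodory theorem). A cleaner route than ``the expansion collapses'' is to apply Julia--Carath\'eodory to the diagonal slice $\lambda\mapsto\phi_i(\lambda\zeta_1,\lambda\zeta_2)$, whose angular derivative $\zeta_1\frac{\partial\phi_i}{\partial z_1}(\zeta)+\zeta_2\frac{\partial\phi_i}{\partial z_2}(\zeta)$ must be nonzero; this gives $\nabla\phi_i(\zeta)\neq0$ directly, and it also shows that in your necessity argument neither row of $d_\zeta\phi$ can vanish, so the rank-one case really is ``rows proportional with $\lambda>0$'' (your rank-zero remark does not cover the case of a single vanishing row). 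For necessity the paper merely cites \cite{KSZ}, whereas you give a self-contained argument via Claim~\ref{claim:up} and the volume $\simeq\delta^{2(\beta+1)+3/2}$ of the linear sublevel set against $V_\beta(S(\zeta,(\delta,\delta)))\simeq\delta^{2\beta+4}$; this is sound, and it is in fact the same mechanism the paper deploys for the tridisc necessity, so your route makes the two-dimensional statement independent of \cite{KSZ}.

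The one genuine flaw is the Hardy-space necessity. Proposition~9.3 of \cite{Bay} converts \emph{uniform} $A_\beta^2$ Carleson bounds for $-1<\beta<0$ into boundedness on $H^2$; it runs only in the sufficiency direction, so it cannot turn your $A_\beta^2$ obstruction into unboundedness of $C_\phi$ on $H^2$. To get the ``only if'' for $H^2$ you need either the necessity result of \cite{KSZ} (as the paper does) or a direct Carleson-measure argument for the pullback of the boundary measure under $\phi$; as written, that half of the Hardy statement is unproved.
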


\begin{proof} Necessity follows from \cite{KSZ}, while sufficiency is a direct consequence of Theorem~\ref{thm:3.2}.
\end{proof}

\begin{corollary}\label{thm:FO}
First order conditions are sufficient for $H^2(\DD^n)$ and for $A_\beta^2(\DD^n)$ with $\beta<0$ if and only if $n= 2$.

\end{corollary}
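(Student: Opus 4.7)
The plan is to prove the two directions separately: sufficiency at $n=2$ and failure of sufficiency at every $n\geq 3$.

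For the sufficiency when $n=2$, I would invoke Theorem~\ref{th:bidisc} directly. Bayart's first-order necessary conditions on the bidisc amount exactly to the invertibility of $d_\zeta\phi$ at every $\zeta\in\TT^2$ with $\phi(\zeta)\in\TT^2$, and Theorem~\ref{th:bidisc} states that this single condition already characterises boundedness of $C_\phi$ on both $A_\beta^2(\DD^2)$ and $H^2(\DD^2)$.

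For $n\geq 3$ I would exhibit one universal counterexample, namely the symbol
\[
\Phi(z)=(z_1\cdots z_n,\,z_1\cdots z_n,\,\ldots,\,z_1\cdots z_n,\,0)
\]
from Corollary~\ref{cor:sharp} with $\varphi=f$. Because its last component is identically zero, $\Phi(\zeta)$ never lies in the distinguished boundary $\TT^n$; the first-order necessary conditions, which only constrain those $\zeta\in\TT^n$ with $\Phi(\zeta)\in\TT^n$, are therefore vacuously satisfied by $\Phi$. It remains to verify that $C_\Phi$ is unbounded in either of the two function spaces.

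On $A_\beta^2(\DD^n)$ with $-1<\beta<0$ this is immediate from Corollary~\ref{cor:sharp}: that corollary gives boundedness exactly when $n\leq\beta+3$, which fails as soon as $n\geq 3$ and $\beta<0$. On $H^2(\DD^n)$ I would instead give a direct test-function estimate. With $N_K=\binom{K+n-2}{n-2}$, set
\[
h_K(w)=\frac{1}{\sqrt{N_K}}\sum_{k_1+\cdots+k_{n-1}=K}w_1^{k_1}\cdots w_{n-1}^{k_{n-1}}.
\]
By orthonormality of the monomials in $H^2(\DD^n)$ we have $\|h_K\|_{H^2}=1$, while a short direct computation gives $(C_\Phi h_K)(z)=\sqrt{N_K}\,(z_1\cdots z_n)^K$, so $\|C_\Phi h_K\|_{H^2}^2=N_K\simeq K^{n-2}\to\infty$ for $n\geq 3$.

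The one delicate point, and the main obstacle, is being explicit about what counts as a first-order condition. The map $\Phi$ plainly fails the rank condition $(\dag)$ of Theorem~\ref{thm:3.2} on the sub-index $I=\{1,\ldots,n-1\}$, since the rows of $d_\zeta\Phi_I$ are identical; so the argument relies on the fact that Bayart's linear conditions are genuinely weaker than $(\dag)$, constraining only those $\zeta$ at which the \emph{full} image $\Phi(\zeta)$ lies in $\TT^n$ and imposing no sub-torus rank requirement that $\Phi$ would otherwise violate. This is exactly what produces the clean dichotomy between $n=2$ and $n\geq 3$.
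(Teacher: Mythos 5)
Your overall architecture matches the paper's: sufficiency for $n=2$ via Theorem~\ref{th:bidisc}, and failure for $n\geq 3$ via the symbol $\Phi(f)=(f,\ldots,f,0)$ with $f(z)=z_1\cdots z_n$ from Proposition~\ref{prop:beg}/Corollary~\ref{cor:sharp}. Your explicit Hardy-space computation with $h_K$ is a nice concrete supplement (the paper merely asserts unboundedness of $C_{\Phi(f)}$ on $H^2(\DD^n)$), and the $A_\beta^2$ part is exactly the paper's appeal to Corollary~\ref{cor:sharp}.

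However, there is a genuine gap at the step you yourself flag as delicate: the verification that $\Phi(f)$ satisfies the first-order conditions. Your argument is that these conditions are vacuous for $\Phi(f)$ because the last component is $0$, on the grounds that Bayart's linear conditions ``only constrain those $\zeta\in\TT^n$ with $\Phi(\zeta)\in\TT^n$'' and impose ``no sub-torus rank requirement.'' That is a mischaracterization: Bayart's necessary conditions are imposed at every $\zeta\in\TT^n$ and every index set $I$ with $\Phi_I(\zeta)\in\TT^{|I|}$, not only when the full image lies in the distinguished boundary; they are weaker than $(\dag)$ because they do not demand full rank of $d_\zeta\Phi_I$, not because they ignore sub-tori. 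For $\Phi(f)$ the set $I=\{1,\ldots,n-1\}$ satisfies $\Phi_I(\zeta)\in\TT^{n-1}$ at \emph{every} $\zeta\in\TT^n$, so the conditions are very much in force there. Your reading is also inconsistent with the paper's other corollary: under it, the map $(z_1,z_1,0)$ on $\DD^3$ would vacuously satisfy the first-order conditions, yet it violates condition \emph{2.} of Theorem~\ref{th:tridisc} and hence $C_\phi$ is unbounded on $A^2(\DD^3)$, contradicting the sufficiency of first-order conditions for $n=3$. So the vacuity argument cannot stand; what saves the counterexample is a genuine verification: all partial derivatives $\partial f/\partial z_j$ are unimodular on $\TT^n$, so the (non-rank, nonvanishing-pattern) linear conditions hold for $\Phi(f)$ despite $d_\zeta\Phi_I$ having rank $1$ --- equivalently, as the paper argues, at each $\zeta\in\TT^n$ one can choose $g$ as in Proposition~\ref{prop:beg} with $g(\zeta)\in\TT$ and $d_\zeta\Phi(g)=d_\zeta\Phi(f)$, and $\Phi(g)$ satisfies the first-order conditions, which depend only on the derivative at the point. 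Without some such verification your proof of the ``only if'' direction is incomplete.
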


\begin{proof} In view of Theorem~\ref{th:bidisc} it is enough to show that the first order conditions fail for $n\geq 3$. Let $f$ be as in Proposition~\ref{prop:beg}. If $n>2$, then $C_{\Phi(f)}$ is not bounded on $A_\beta^2(\DD^n)$ or $H^2(\DD^n)$. Nevertheless, one can check that it does satisfy the first order conditions. The last fact can also be deduced in the following way: for any $\zeta\in \TT^n$ such that $f(\zeta)\in \TT$ there is $g$ as in Proposition~\ref{prop:beg} such that $g(\zeta)\in \TT$ and the derivatives $d\Phi(f)$ and $d \Phi(g)$ coincide at $\zeta$.
\end{proof}

Therefore the bidisc and the tridisc are very particular for the problem of boundedness of composition operators on the Bergman space. According to Theorem~\ref{th:bidisc} the sufficient and necessary condition is quite simple to formulate in the case of the bidisc. As we shall see below, it turns out it is also the case for the tridisc. 

\begin{theorem}\label{th:tridisc}
Let $\phi:\DD^3\to \DD^3$ be holomorphic on $\DD^3$ and $\mathcal C^2$-smooth on $\overline{\DD}^3$. Then $C_\phi$ is bounded on $A^2(\DD^3)$ if and only if $\phi$ satisfies the following properties:
\begin{enumerate}
	\item for any $\zeta\in \TT^3$ such that $\phi(\zeta)\in \TT^3$ the derivative $d_\zeta \phi$ is invertible; and
	\item for $1\leq i_1<i_2\leq 3$ and any $\zeta\in \TT^3$ such that $(\phi_{i_1}(\zeta), \phi_{i_2}(\zeta)) \in \TT^2$ either 
	
	a) $\nabla \phi_{i_1}(\zeta)$, $\nabla \phi_{i_2} (\zeta)$ are linearly independent, or 
	
	b) $\frac{\partial \phi_{i_k}}{\partial z_j}(\zeta)\neq 0$ for $j=1,2,3$ and $k=1,2$. 
\end{enumerate}
\end{theorem}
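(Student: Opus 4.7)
The argument splits into necessity and sufficiency, each verified through the Carleson box test \eqref{eq:box} with $\beta=0$.

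\emph{Necessity.} Condition (1) at $\zeta\in\TT^3$ with $\phi(\zeta)\in\TT^3$ is the standard first order necessary condition from \cite{KSZ,Bay}. Observe that in this case condition (1) already forces the three rows of $d_\zeta\phi$ to be linearly independent, so (a) holds for every pair and (2) is automatic. It thus suffices to rule out the case of a point $\zeta\in\TT^3$ with $(\phi_{i_1},\phi_{i_2})(\zeta)\in\TT^2$, $\phi_{i_3}(\zeta)\notin\TT$, and both (a) and (b) failing. In that setting, the failure of (b) (say $\partial\phi_{i_1}/\partial z_{j_0}(\zeta)=0$) combined with the Julia--Carath\'eodory argument from the proof of Lemma~\ref{lem:est} forces $\phi_{i_1}$ to be constant in $z_{j_0}$ along the slice $z_j=\zeta_j$ for $j\ne j_0$; the linear dependence of gradients propagates the same constancy to $\phi_{i_2}$. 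Testing \eqref{eq:box} against $S(\eta,(\delta,\delta,1))$ for a suitable $\eta\in\TT^3$, the preimage factors as a $z_{j_0}$-fiber of width $\simeq 1$ times a 2D slice whose volume is $\succeq\delta^{7/2}$ by Lemma~\ref{lem:lb} applied to $\phi_{i_1}$ on the slice (the second sublevel-set condition is redundant by dependence). Against the box volume $\delta^4$ this ratio blows up as $\delta^{-1/2}$.

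\emph{Sufficiency.} I verify \eqref{eq:box} locally on a chart $U_\xi$ around $\xi\in\TT^3$ and patch by compactness. Only components in the stratum $I(\xi)=\{j:\phi_j(\xi)\in\TT\}$ constrain the preimage; the others satisfy $\delta_j\succeq 1$ effectively and drop from both sides. If $|I(\xi)|=3$, condition (1) makes $\phi$ a local biholomorphism and the estimate is immediate. If $|I(\xi)|=1$, Lemma~\ref{lem:est} applied to that component (reduced to the variables in which it is non-constant) gives preimage volume $\preceq\delta_1^{k+1}\le\delta_1^2\simeq\vol(S)$ with $k\ge 1$. If $|I(\xi)|=2$ and (a) holds, the pair $(\phi_{i_1},\phi_{i_2})$ has rank $2$, so one straightens to local coordinates and the preimage is a product of volume $\simeq\delta_1^2\delta_2^2\simeq\vol(S)$.

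The chief obstacle is the remaining subcase $|I(\xi)|=2$ with dependent gradients but condition (b), which lies exactly at the boundary of what Theorem~\ref{thm:3.2} handles. Here I apply Lemma~\ref{lem:est} to \emph{each} component $\phi_{i_k}$ individually: condition (b), combined with the Julia--Carath\'eodory remark that a nonzero boundary partial entails genuine non-constancy on the slice, supplies precisely the hypothesis of Lemma~\ref{lem:est}, giving $\vol\{|\phi_{i_k}-\eta_k|<\delta_k\}\preceq\delta_k^{n+1}=\delta_k^4$ in $\DD^3$. Intersecting and keeping the minimum yields preimage volume $\preceq\min(\delta_1^4,\delta_2^4)\le\delta_1^2\delta_2^2\simeq\vol(S)$. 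The matching of the exponent $n+1=4$ against the box contribution $\delta_1^2\delta_2^2$ uses $n=3,\beta=0$, in line with Corollary~\ref{cor:sharp}; the same argument breaks for $H^2$ or $\beta\neq 0$, which explains why the clean characterization is special to $A^2(\DD^3)$.
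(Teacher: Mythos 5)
Your sufficiency half follows the paper's strategy almost exactly: stratify according to which components of $\phi$ are unimodular at the point, use invertibility (case $|I|=3$) or rank two (case 2a) to straighten and get the product bound $\delta_{i_1}^2\delta_{i_2}^2$ (this is the paper's Lemma~\ref{lem:de}/Corollary~\ref{cor:de}), and in case 2b apply Lemma~\ref{lem:est} to each of $\phi_{i_1},\phi_{i_2}$ separately and use $\min(\delta_{i_1}^4,\delta_{i_2}^4)\leq\delta_{i_1}^2\delta_{i_2}^2$. What you gloss over is the patching: you only take charts around points of $\TT^3$, but the preimage of a Carleson box can cling to the topological boundary of $\DD^3$ far from the distinguished boundary, so one must know that conditions \emph{1}, \emph{2} (and the rank-one statement coming from Julia--Carath\'eodory) persist at points of $\overline\DD^3\setminus\TT^3$; this is exactly what Lemma~\ref{lem:obs} supplies in the paper, and also what guarantees the nondegeneracy hypotheses of Lemma~\ref{lem:est} and of the straightening argument on the whole chart, not just at its $\TT^3$-center. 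This is a fixable omission rather than a wrong idea.

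The necessity half, however, has a genuine gap in the lower-bound mechanism. From $\frac{\partial\phi_{i_1}}{\partial z_{j_0}}(\zeta)=0$ you correctly infer (via Julia--Carath\'eodory) that $\phi_{i_1}$ is constant on the one-dimensional slice $\{z_j=\zeta_j,\ j\neq j_0\}$, but constancy on that boundary slice does \emph{not} make the sublevel set $\{|\phi_{i_1}-\eta_1|<\delta\}$ contain a product of a $z_{j_0}$-fiber of width $\simeq 1$ with a two-dimensional set; and applying Lemma~\ref{lem:lb} slice-by-slice in $z_{j_0}$ produces constants that a priori depend on the fixed value of $z_{j_0}$, so no three-dimensional volume bound follows without a uniformity argument (which would require Lemma~\ref{lem:obs} to see that the relevant gradients do not vary along the fiber). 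Moreover, the step ``the second sublevel-set condition is redundant by dependence'' is unjustified: proportionality of $\nabla\phi_{i_1}(\zeta)$ and $\nabla\phi_{i_2}(\zeta)$ does not give $\{|\phi_{i_1}-\eta_1|<\delta\}\subset\{|\phi_{i_2}-\eta_2|<C\delta\}$ --- Claim~\ref{claim:up} bounds $|f-\eta|$ \emph{above} by the linear form, not below, so smallness of $\phi_{i_1}-\eta_1$ controls nothing about $\phi_{i_2}$. The paper repairs both defects simultaneously by applying Claim~\ref{claim:up} to \emph{both} components: their linear parts are proportional and have $\alpha_{j_0}=0$, so the preimage of $S(\zeta,(\delta,\delta,1))$ contains the set $\{z\in U\cap\DD^3:\ |\sum_{j\neq j_0}\alpha_j(z_j-1)|\leq c\delta\}$, a genuine cylinder in the $z_{j_0}$-direction, whose volume is $\succeq\delta^{7/2}$ by Bayart's computation (cf.\ Proposition~\ref{prop:beg}); compared with the box volume $\delta^4$ this gives the contradiction. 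As written, your argument does not establish the $\delta^{7/2}$ lower bound, so the necessity of condition \emph{2} is not proved.
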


The theorem above says that in the settings of $A^2(\DD^3)$ it is possible to characterize boundedness of the operator $C_\phi$ in terms of the derivative of $\phi$ on $\TT^3$. We already knew that was the case for $A^2(\DD^2)$. The following corollary shows that there is no hope to achieve such characterization for $A^2(\DD^n)$ when $n>3$.

\begin{corollary}
First order conditions are sufficient for $A^2(\DD^n)$ if and only if $n\leq 3$.
\end{corollary}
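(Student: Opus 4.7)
My plan is to treat the two directions separately and in each case reduce to results already established in this paper.

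For the easy direction --- first-order conditions are sufficient when $n\leq 3$ --- I would appeal to Theorem~\ref{th:bidisc} in dimension two and to Theorem~\ref{th:tridisc} in dimension three. In both theorems the hypotheses depend only on the Jacobian $d_\zeta\phi$ at the relevant boundary points together with the (non)vanishing of its entries, so they are first-order in nature. Since Bayart's linear conditions are known to be necessary for boundedness, and since Theorems~\ref{th:bidisc} and~\ref{th:tridisc} furnish a first-order characterisation of boundedness, the two must coincide. In particular the first-order conditions suffice when $n=2,3$ (the one-dimensional case being classical).

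For the harder direction --- insufficiency when $n\geq 4$ --- I would imitate the argument of Corollary~\ref{thm:FO}, now with the classical weight $\beta=0$. Take $f(z)=z_1\cdots z_n$ and $\Phi(f)=(f,\ldots,f,0)$ as in Corollary~\ref{cor:sharp}. With $\beta=0$, the threshold $n\leq \beta+3$ appearing in Corollary~\ref{cor:sharp} reads $n\leq 3$, so $C_{\Phi(f)}$ is unbounded on $A^2(\DD^n)$ for every $n\geq 4$. It remains to show that $\Phi(f)$ nevertheless satisfies Bayart's first-order conditions. Since these conditions depend only on the Jacobian, the device of Corollary~\ref{thm:FO} applies: at every $\zeta\in\TT^n$ with $f(\zeta)\in\TT$ one can choose coefficients $\alpha_1,\ldots,\alpha_n\neq 0$ with $\sum|\alpha_j|=1$ so that $g=\alpha_1 z_1^n+\ldots+\alpha_n z_n^n$ satisfies $g(\zeta)\in\TT$ and $d_\zeta\Phi(g)=d_\zeta\Phi(f)$, using the identities $d_\zeta f=f(\zeta)(\bar\zeta_1,\ldots,\bar\zeta_n)$ and $d_\zeta(z_j^n)=n\zeta_j^{n-1}e_j$ at a unimodular point.

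The main obstacle is verifying that $\Phi(g)$ itself satisfies the first-order conditions, since the Jacobian match then transfers them back to $\Phi(f)$. For $4\leq n\leq 5$ this is automatic from Corollary~\ref{cor:sharp}: the inequality $n/2\leq \beta+5/2$ with $\beta=0$ becomes $n\leq 5$, giving boundedness of $C_{\Phi(g)}$ on $A^2(\DD^n)$, and the first-order conditions being necessary for boundedness they hold for $\Phi(g)$ gratis. For $n\geq 6$, however, both $\Phi(f)$ and $\Phi(g)$ are unbounded, so the matching-to-a-bounded-symbol argument is unavailable and the conditions must be checked directly. The key structural observation is that the last coordinate of $\Phi(f)$ is identically zero, so $\Phi(f)(\TT^n)$ avoids the distinguished boundary $\TT^n$; only the sub-symbols $\Phi(f)_I$ with $I\subset\{1,\ldots,n-1\}$ contribute first-order requirements, and the rank-one structure of their Jacobians reduces Bayart's linear conditions to the Julia--Carath\'eodory positivity $\partial_j f(\zeta)>0$, which is automatic. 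Combining this verification with the unboundedness from Corollary~\ref{cor:sharp} produces the desired counterexample in every dimension $n\geq 4$ and completes the proof.
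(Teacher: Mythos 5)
Your proposal follows essentially the same route as the paper: sufficiency for $n\le 3$ is read off from Theorems~\ref{th:bidisc} and~\ref{th:tridisc}, and insufficiency is witnessed by $\Phi(f)=(f,\ldots,f,0)$ with $f(z)=z_1\cdots z_n$, whose unboundedness on $A^2(\DD^n)$ for $n\ge 4$ is Corollary~\ref{cor:sharp} with $\beta=0$. The paper's own proof stops at exhibiting the counterexample for $n=4$ and certifies the first-order conditions for $\Phi(f)$ by exactly the derivative-matching device you describe; you go further and correctly observe that this device only certifies anything when the comparison symbol $\Phi(g)$ is itself bounded on $A^2(\DD^n)$, which by Corollary~\ref{cor:sharp} holds precisely for $n\le 5$, so that for $n\ge 6$ something else is needed. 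That is a genuine observation which the paper sidesteps by treating only $n=4$. However, your substitute for $n\ge 6$ --- that the rank-one structure of the Jacobians reduces Bayart's linear conditions to Julia--Carath\'eodory positivity --- is an assertion about conditions that are never written down in this paper, so as it stands it is a sketch one cannot check here; a cleaner and verifiable fix is to promote the four-dimensional counterexample to dimension $n$ via $\Phi(f)\times\id_{\DD^{n-4}}$: the Carleson-box ratio factors, so the operator stays unbounded, while the first-order conditions, being local in the boundary point and trivially satisfied on the identity factor, are inherited from the $n=4$ case. One smaller caveat: in the "if" direction, "both characterisations are first-order, hence they must coincide" is not a valid inference (two distinct first-order conditions can both be necessary); what is actually needed is that Bayart's conditions imply conditions \emph{1} and \emph{2} of Theorem~\ref{th:tridisc}. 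The paper is equally terse on this point, but your phrasing makes the circularity explicit rather than avoiding it.
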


\begin{proof}
"If" follows from Theorem~\ref{th:tridisc}. To prove "only if" we proceed as in the proof of Theorem~\ref{thm:FO}. Keeping the notation from there we see that for $n=4$, the operator $C_{\Phi(f)}$ is not bounded on $A^2(\DD^n)$, while it satisfies the first order conditions.
\end{proof}

\begin{proof}[Proof of Theorem~\ref{th:tridisc}]

Let us prove the necessity. Seeking a contradiction assume that {\it 1.} and {\it 2.} are not satisfied. Take $\zeta\in \TT^3$. If $\phi(\zeta)\in \TT^3$, we use the necessity from \cite{KSZ}. So suppose that, up to a permutation of components, $\phi(\zeta)\in \TT^2 \times \DD$, and condition {\it 2.} is not satisfied, i.e. $\nabla \phi_1(\zeta), \nabla \phi_2 (\zeta)$ are linearly dependent and $\frac{\partial \phi_1}{\partial z_j}(\xi)$ vanishes for some $j=1,2,3$. Then the assertion follows from the Claim~\ref{claim:up}. To see this we can make some reductions: we can assume that $\zeta=(1,1,1)$ and $\phi_1(\zeta)= \phi_2(\zeta) =1$. Let $\delta = (\delta_1, \delta_1, 1)$. Denote $\alpha_j = \frac{\partial \phi_1}{\partial z_j}(\zeta)\geq 0$. Then it follows from Claim~\ref{claim:up}, and the fact that the derivatives of $\phi_1$, $\phi_2$ are linearly dependent, that there is $C>0$ such that $\{z\in \DD^3:\ |\sum \alpha_j (z_j -1) | \leq \delta_1\}$ is contained in $\{z\in \DD^3:\ |\phi_i(z)-1|< C \delta_1,\ i=1,2\}$. As at least one of $\alpha_j$ vanishes, we infer that $\vol(\Phi^{-1}(S(\zeta, \delta)))\succeq \delta_1^{7/2}$. Since $\vol(S(\zeta, \delta))\simeq \delta_1^4$, the necessity follows.

Before we start the proof of the sufficiency let us observe that the assumptions of the theorem, that are two conditions {\it 1.} and {\it 2.}, are formulated only for $\zeta\in \TT^3$. However, Lemma~\ref{lem:obs} assures us that they remain true if $\zeta\in \overline\DD^3$. Note also that, by the Julia-Carath\'eodory theorem, $\phi$ automatically satisfies one additional property: 

{\it 3. if, up to a permutation of the components, $\phi(\zeta)\in \TT\times \DD^2$, the rank of the derivative $d_\zeta \phi_1$ is $1$.}

It is a trivial observation that sets composed of points $\zeta\in \partial {\DD}^n$ that satisfy any of conditions {\it 1, 2a), 2b), 3} are open and contain $\phi^{-1}(\partial \DD^n)$. We shall cover them with a finite number of domains that posses particular properties. 

Let $\{U^1_\iota\}$ be finitely many polydiscs that cover the set $\phi^{-1}(\TT^3)$ such that the derivative $d_\zeta \phi$ is invertible on $\overline{U}_\iota^1\cap \overline \DD^3$. Let $\epsilon>0$ be such that for any $z\in \partial \DD^3\setminus \bigcup_i U_i^1$ there exists $i=1,2,3,$ such that $|\phi_i(z)|<1-\epsilon$.

We may take a finite number of polydiscs $\{U_\iota^2\}$ that cover the set $\phi^{-1}((\TT^2\times \DD) \cup (\TT\times \DD \times \TT) \cup (\DD \times \TT^2))\setminus \bigcup U^1_\iota$ and such that one of conditions {\it 2a.} or {\it 2b.} is satisfied on every $\overline U_\iota^2 \cap \DD^3$. We can additionally assume that every $U_\iota^2$ satisfies the following: there is a permutation $i_1, i_2, i_3$ of $1,2,3$ such that  $|\phi_{i_3}(z)|<1-\epsilon$ on $\overline U_\iota^2\cap \DD^3$ and 

-- either there are $j_1, j_2$ such that $\det \frac{\partial(\phi_{i_1}, \phi_{i_2})}{\partial (z_{j_1}, z_{j_2})}\neq 0$, or

-- $\frac{\partial \phi_{i_k}}{\partial z_j}\neq 0$, $j=1,2,3$ and $k=1,2.$

Finally we define a finite family of polydiscs $\{U_\iota^3\}$ that cover $\phi^{-1} ((\TT\times \DD^2)\cup (\DD \times \TT \times \DD) \cup (\DD^2\times \TT))\setminus (\bigcup U_\iota^1\cup \bigcup U_\iota^2)$ such that on every $\overline U_\iota^3\cap \overline \DD^3$ there is a permutation $i_1, i_2, i_3$  and $j=1,2,3$ such that  $\frac{\partial \phi_{i_3}}{\partial z_j}\neq 0$ and $|\phi_{i_2}(z)|, |\phi_{i_3}(z)|<1-\epsilon$ for $z\in U_\iota^3\cap \overline \DD^3$ (we decrease $\epsilon$, if necessary). Decreasing $\epsilon$ again we can assume that 
\begin{equation}\label{eq:eps}|\phi_j(z)|<1-\epsilon
\end{equation}
for $j=1,2,3$ and $z$ lying out of the constructed covering.

We aim at proving that $\vol (\phi^{-1}(S(\zeta, \delta))) \preceq \vol(S(\zeta, \delta))$ for $\zeta \in \TT^3$ and $\delta\in (0,1)^3$. By \eqref{eq:eps} it is enough to show that
\begin{equation}\label{eq:123} \vol (\phi^{-1}(S(\zeta, \delta))\cap U_\iota^i)\leq C_{i, \iota} \vol(S(\zeta, \delta)) \text{ for any } i,\iota.
\end{equation}

For $i=1$ inequality \eqref{eq:123} follows directly from Corollary~\ref{cor:de}. Suppose that $i=2$ and the condition {\it 2.a)} is satisfied on $U_\iota^2$. Permuting the variables and components we can assume that $|\phi_3|<1-\epsilon$ and $\det \frac{\partial (\phi_1, \phi_2)}{\partial (z_1, z_2)} \neq 0$ on $\overline U_\iota^2\cap \overline \DD^3$. Write $U_\iota^2=U'\times U''\subset \CC^2\times \CC$. Fixing $z_3\in U''$ and applying Corollary~\ref{cor:de} to $\psi(z_1, z_2) = \phi(z_1, z_2, z_3)$ we get the assertion in this case. We proceed in the same way if $i=3$.

We are left with proving inequality \eqref{eq:123} in the case when condition {\it 2.b} holds on $U_\iota^2$. Here we can apply Lemma~\ref{lem:est} to $\phi_{i_1}$ and $\phi_{i_2}$ obtaining that $\vol (\phi^{-1}(S(\zeta, \delta)) \cap U_\iota^2)\preceq \min (\delta_{i_1}^4, \delta_{i_2}^4)\leq \delta_{i_1}^2 \delta_{i_2}^2.$ This finishes the proof.

\end{proof}

\begin{lemma}\label{lem:3.5}
Suppose that $\Phi\in \mathcal O(\DD^n, \DD^n)\cap \mathcal C^1(\overline{\mathbb D}^n)$ is such that for any $q$ and any $|I|=q$ the following holds:
$$(\dag)\quad  \forall \zeta\in \TT^n \text{ such that }\Phi_I(\zeta)\in \TT^q:\quad  \rank d_\zeta \Phi_I=q.$$
Then for any $\beta>-1$ there is a constant $C_\beta$ such that the composition operator $C_\Phi$ maps $A_\beta^2$ continuously into itself and its norm is bounded by $C_\beta$. Moreover, $C_\beta$ can be chosen independently of $\beta$ if $\beta<0$.
\end{lemma}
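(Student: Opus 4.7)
The plan is to verify the Carleson-box criterion~\eqref{eq:box} with constants uniform in $\beta\in(-1,0)$; by Bayart's Proposition~9.3 this also yields the conclusion for $H^2$. Explicitly, for every $\xi\in\TT^n$ and $\delta\in(0,1)^n$ I aim to prove
\[
V_\beta\bigl(\Phi^{-1}(S(\xi,\delta))\bigr)\;\preceq\;\prod_{j=1}^n\delta_j^{2+\beta},
\]
with an implied constant independent of $\xi,\delta$ and uniform in $\beta\in(-1,0)$.

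Step one is a finite cover of $\overline\DD^n$. For $\zeta\in\overline\DD^n$ set $I(\zeta)=\{j:|\phi_j(\zeta)|=1\}$. Using $(\dag)$ at $\zeta\in\TT^n$, extended to $\overline\DD^n$ by Lemma~\ref{lem:obs}, and continuity of $d\Phi$, each $\zeta$ admits a polydisc neighbourhood $U_\zeta$ together with constants $\epsilon,c>0$, a set $I_\zeta\supset I(\zeta)$, a set $J_\zeta\subset\{1,\dots,n\}$ with $|J_\zeta|=|I_\zeta|$, and a bijection $\sigma_\zeta:J_\zeta\to I_\zeta$ (extracted from the Leibniz expansion of the non-vanishing determinant) such that on $\overline{U_\zeta}\cap\overline\DD^n$ we have $|\phi_j|\leq 1-\epsilon$ for $j\notin I_\zeta$, $|\det\partial\Phi_{I_\zeta}/\partial z_{J_\zeta}|\geq c$, and $|\partial\phi_{\sigma_\zeta(k)}/\partial z_k|\geq c$ for every $k\in J_\zeta$. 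Extract a finite subcover $\{U_\iota\}$ with associated data $(I_\iota,J_\iota,\sigma_\iota)$.

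Fix $\iota$. Coordinates $j\notin I_\iota$ are harmless: if $\delta_j<\epsilon$ for some such $j$ then $\Phi^{-1}(S(\xi,\delta))\cap U_\iota=\emptyset$; otherwise $\delta_j^{2+\beta}\simeq 1$ and is absorbed. Writing $E:=\{z\in U_\iota:|\phi_j(z)-\xi_j|<\delta_j,\ j\in I_\iota\}$, it suffices to prove $V_\beta(E)\preceq\prod_{j\in I_\iota}\delta_j^{2+\beta}$ uniformly in $\beta\in(-1,0)$. Using Fubini to integrate $z_{J_\iota^c}$ over a compact subset of $\DD^{|J_\iota^c|}$ against the weight $(\beta+1)^{|J_\iota^c|}\prod_{k\in J_\iota^c}(1-|z_k|^2)^\beta$ (which contributes a factor bounded uniformly in $\beta\in(-1,0)$), the task reduces to the slicewise estimate
\[
\int_{E_{z_{J_\iota^c}}}\!\prod_{k\in J_\iota}(1-|z_k|^2)^\beta\, dA(z_{J_\iota})\;\preceq\;(\beta+1)^{-|J_\iota|}\prod_{j\in I_\iota}\delta_j^{2+\beta},
\]
with $E_{z_{J_\iota^c}}$ denoting the $z_{J_\iota}$-slice of $E$.

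The slicewise map $F:z_{J_\iota}\mapsto\Phi_{I_\iota}(z_{J_\iota},z_{J_\iota^c})$ is a biholomorphism with $|J_F|\simeq 1$ sending $E_{z_{J_\iota^c}}$ onto the Carleson box $\{w:|w_j-\xi_j|<\delta_j,\ j\in I_\iota\}$. Lemma~\ref{lem:mob}, applied in $z_k$ to the non-constant slice $z_k\mapsto\phi_{\sigma_\iota(k)}(\ldots,z_k,\ldots)$, yields the radial control $1-|z_k|\preceq\delta_{\sigma_\iota(k)}$ on $E_{z_{J_\iota^c}}$; together with a tangential control $|z_k-\zeta_k|\preceq\delta_{\sigma_\iota(k)}$, which after sufficient refinement of $\{U_\iota\}$ (so that the Jacobian matrix is close to its dominant $\sigma_\iota$-permutation) follows from Bayart's Julia--Carath\'eodory expansion (Corollary~2.7), the slice is contained in $\prod_{k\in J_\iota}\{|z_k-\zeta_k|<C\delta_{\sigma_\iota(k)}\}$. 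One-dimensional Bergman calculus then gives
\[
\int_{|z_k-\zeta_k|<C\delta_{\sigma_\iota(k)}}(1-|z_k|^2)^\beta\, dA(z_k)\;\simeq\;\frac{\delta_{\sigma_\iota(k)}^{2+\beta}}{\beta+1},
\]
and multiplying over $k\in J_\iota$ matches the target with the correct $(\beta+1)^{-|J_\iota|}$ factor; all powers of $(\beta+1)$ cancel against the $(\beta+1)^n$ normalisation of $dV_\beta$, and uniformity in $\beta\in(-1,0)$ follows. The main obstacle I anticipate is precisely the tangential control: cross-derivatives $\partial\phi_j/\partial z_k$ with $k\neq\sigma_\iota^{-1}(j)$ can in principle mix the $\delta_j$-scales, so correctly enclosing $E_{z_{J_\iota^c}}$ in the claimed product may require working in $\Phi$-adapted coordinates on the Shilov boundary rather than relying on the naive product structure.
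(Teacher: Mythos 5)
Your overall strategy (finite boundary cover adapted to the rank condition, reduction to a slicewise estimate via Fubini, one--dimensional Bergman estimates uniform in $\beta\in(-1,0)$, then Bayart's Proposition~9.3 for $H^2$) is the same skeleton as the paper's, but the step you rely on --- and which you yourself flag as the ``main obstacle'' --- is genuinely false, and it is exactly the point where the paper's proof does something different. You claim the slice $E_{z_{J_\iota^c}}$ is contained in a product of discs $\prod_{k}\{|z_k-\zeta_k|<C\delta_{\sigma_\iota(k)}\}$ with \emph{fixed} centers and with each variable matched to ``its own'' $\delta$, citing Bayart's Julia--Carath\'eodory expansion. Julia--Carath\'eodory only yields tangential control at scale $\sqrt{\delta}$ (this is precisely how the paper's Claim~\ref{claim:up} and the lower bound $\delta^{(3n+1)/2}$ arise), and cross-derivatives really do smear the small scales: take $\Phi(z_1,z_2)=\bigl(z_1,\tfrac{z_1+z_2}{2}\bigr)$, $\xi=(1,1)$, $\delta=(\tfrac12,\delta_2)$ with $\delta_2$ small. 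The constraint $|\phi_2(z)-1|<\delta_2$ only places $z_2$ in a disc of radius $2\delta_2$ centered at $2-z_1$, a center that moves with $z_1$; choosing $z_1=1-w$ with $\Re w\simeq\delta_2$, $|\Im w|\simeq\sqrt{\delta_2}$, one finds admissible points with $|z_2-1|\simeq\sqrt{\delta_2}\gg\delta_2$, so no containment in a fixed disc of radius $C\delta_2$ holds. Thus no refinement of the cover or ``dominant permutation'' can rescue the naive product structure, and your parenthetical hope that it ``follows from Corollary~2.7'' does not.

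The paper's Lemma~\ref{lem:de} is the missing idea: instead of a product with fixed centers, one proves a \emph{triangular} (nested) containment. After ordering the $\delta$'s decreasingly, a bi-Lipschitz argument bounds the first chosen coordinate by the \emph{largest} $\delta$ around a fixed point; then one freezes that coordinate, passes to the $(k-1)\times(k-1)$ subsystem with nonvanishing minor, and bounds the next coordinate by the next $\delta$ around a center \emph{depending on the previously frozen coordinates}, and so on. Since Lemma~\ref{lem:ad} is uniform in the center $a$, Fubini applied in this nested order still produces $\prod_j\delta_j^{2+\beta}$ (the assignment of $\delta$'s to variables is only a rearrangement, so the product is unaffected), with constants uniform for $\beta\in(-1,0]$. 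This is exactly the ``$\Phi$-adapted'' bookkeeping you anticipated needing but did not supply. A secondary gap: your argument is written only for $\beta\in(-1,0)$, while the lemma asserts boundedness for every $\beta>-1$; for $\beta\geq0$ the containment-in-small-discs bound goes the wrong way unless one also controls the weights, which the paper does separately via the Schwarz-lemma Jacobian comparison (Claim~\ref{claim:SL}) and Bayart's Theorem~3.1 scheme (your radial control via Lemma~\ref{lem:mob} could be used to patch this, but it is not addressed in the proposal).
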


Note that (\dag) is stated just for points lying in the distinguished boundary of the polydisc. However, we shall show that the fact that it holds on the topological boundary is its consequence. In other words, (\dag) implies that for any $\xi\in \overline{\mathbb D}^n$ such that $\Phi_I(\xi)\in \TT^q$, the derivative $d_\xi \Phi_I$ has rank $q$. To prove this fact we shall need the following simple observation:

\begin{lemma}\label{lem:obs} Suppose that $\varphi \in \mathcal O(\DD^2, \DD)\cap \mathcal C^1(\overline {\DD}^2)$ and points $z_0\in \DD$, $\zeta\in \TT$ are such that $|\varphi(z_0,\zeta)|=1$. Then $\frac{\partial \varphi}{\partial z}(z,\zeta)=0,$ $z\in \DD$, and $z\mapsto \frac{\partial \varphi}{\partial w}(z,\zeta)$ is constant.

In particular, if $\psi\in\mathcal O(\DD^n, \DD)\cap \mathcal C^1(\overline \DD^n)$ and $(z_0,\zeta)\in \DD^m\times \TT^{n-m}$ are such that $|\psi(z_0,\zeta)|=1$, then $z\mapsto \nabla\psi (z, \zeta)$ is constant on $\DD^m$.
\end{lemma}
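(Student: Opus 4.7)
The plan is to establish the bidisc statement first and then reduce the multivariable ``in particular'' claim to it.

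For the vanishing of $\frac{\partial \varphi}{\partial z}(z,\zeta)$, the Montel-based remark recalled in the introduction ensures that $\varphi(\cdot,\zeta)\in \mathcal O(\DD)$ because $\zeta\in \TT$; this slice takes values in $\overline{\DD}$ by continuity of $\varphi$ on $\overline{\DD}^2$. Since $|\varphi(z_0,\zeta)|=1$ at the interior point $z_0$, the maximum modulus principle forces $\varphi(\cdot,\zeta)$ to be the constant $\eta:=\varphi(z_0,\zeta)\in \TT$, and differentiating in $z$ yields the first assertion.

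For the second claim, set $h(z):=\frac{\partial \varphi}{\partial w}(z,\zeta)$. The crucial preliminary step, which I expect to be the main technical obstacle, is to verify that $h$ is holomorphic on $\DD$. For every $w\in \DD$, the map $z\mapsto \frac{\partial \varphi}{\partial w}(z,w)$ is holomorphic on $\DD$ (as a partial derivative of a holomorphic function of two variables), and the $\mathcal C^1$-hypothesis makes $\frac{\partial \varphi}{\partial w}$ uniformly continuous on $\overline{\DD}^2$; consequently $\frac{\partial \varphi}{\partial w}(\cdot,r\zeta)\to h$ uniformly on $\overline{\DD}$ as $r\nearrow 1$, forcing $h\in \mathcal O(\DD)$. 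Now for each fixed $z\in \DD$ the slice $w\mapsto \varphi(z,w)$ is a $\mathcal C^1$ self-map of $\DD$ with boundary value $\eta$ at $\zeta$, and the Julia--Carath\'eodory theorem (whose angular derivative coincides with the ordinary one under our smoothness) gives $\overline{\eta}\,\zeta\cdot h(z)\in [0,\infty)$. A holomorphic function on $\DD$ whose values lie in $[0,\infty)$ is constant by the open mapping theorem, so $h$ is constant, as claimed.

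To obtain the ``in particular'' part, iterate the bidisc lemma. The same Montel remark together with maximum modulus in $m$ variables shows that $\psi(\cdot,\zeta)\equiv \eta$ on $\DD^m$, whence $\frac{\partial \psi}{\partial z_k}(z,\zeta)\equiv 0$ on $\DD^m$ for every $k\leq m$. For $k>m$, fix any $i\in\{1,\ldots,m\}$, freeze all coordinates outside the pair $\{i,k\}$, and apply the bidisc case already proved to the two-variable restriction obtained by varying $z_i\in \DD$ and $z_k$ near $\zeta_k\in \TT$: this yields that $\frac{\partial \psi}{\partial z_k}(\cdot,\zeta)$ is independent of $z_i$. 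Letting $i$ range over $\{1,\ldots,m\}$ gives the constancy of $\frac{\partial \psi}{\partial z_k}(\cdot,\zeta)$ on $\DD^m$, completing the reduction.
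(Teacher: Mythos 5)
Your proposal is correct and follows essentially the same route as the paper: maximum modulus on the slice $z\mapsto\varphi(z,\zeta)$ for the vanishing of $\frac{\partial\varphi}{\partial z}$, then holomorphy of $z\mapsto\frac{\partial\varphi}{\partial w}(z,\zeta)$ (via uniform convergence from the $\mathcal C^1$ hypothesis) combined with the Julia--Carath\'eodory theorem and the open mapping theorem to conclude constancy. You merely supply more detail than the paper does on the holomorphy of the boundary slice of the derivative and on the reduction of the $n$-variable statement to the bidisc case.
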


\begin{proof} We can assume that $\zeta=1$ and $\varphi(z_0, 1)=1$. Note that the first statement is trivial, as $z\mapsto \varphi(z,1)$ is constant. The second one is, in turn, a consequence of this simple observation and the Julia-Carath\'eodory lemma applied to $w\mapsto \varphi(z,w)$, which tells us that for any $z$ the derivative $\frac{\partial \varphi}{\partial w}(z,1)$ is positive. Since $z \mapsto \frac{\partial \varphi}{\partial w}(z,1)$ is analytic, it must be constant.
\end{proof}

\begin{proof}[Proof that $(\dag)$ holds on the topological boundary]
Take $q$, $I$ and $\xi=(z',\zeta'')\in \DD^m\times \TT^{n-m}$ such that $\Phi_I(\xi)\in \TT^q$.
The assertion is a consequence of Lemma~\ref{lem:obs}, according to which $ d_{(z', \zeta'')} \Phi_I =  d_{(\zeta', \zeta'')} \Phi_I$ for any $z'\in \DD^m$ and $\zeta'\in \TT^m$.
\end{proof}

\begin{proof}[Proof of Lemma~\ref{lem:3.5} for $\beta  \in (-1,0)$] 
Decompose the topological boundary of the polydisc onto $n$-sets that depend on the complex dimension: $b_0\DD^n:=\TT^n$, $b_1\DD^n :=(\TT^{n-1}\times \DD) \cup \ldots (\DD\times \TT^{n-1}),\ldots, b_{n-1}\DD^n:=(\TT\times \DD^{n-1}) \cup \ldots \cup (\DD^{n-1}\times \TT).$

Let $\mathcal U_0$ be a finite covering of $\Phi^{-1}(b_0\DD^n)$ by domains such that the derivative $d_z \Phi$ is invertible on $\overline U\cap \overline \DD^n$ on every $U \in \mathcal U_0$. Take $\epsilon>0$ such that for any $z\in \partial \DD^n\setminus \bigcup \mathcal U_0$ there is $i=1,\ldots, n,$ such that $|\Phi_i(z)|<1-\epsilon$. 

Let us explain how we build a finite covering $\mathcal U_1$ of $\Phi^{-1}(b_1 \DD^n)\setminus\bigcup \mathcal U_0$. It will be composed of domains $U_1$ that are to be constructed below. To make this covering finite it suffices to use a compactness argument. Take $z\in \Phi^{-1}(b_1\DD^n)\setminus \bigcup \mathcal U_0$. Permuting the variables and components we can assume that $\Phi(z)\in \TT^{n-1}\times \DD$ and $\det\frac{\partial(\Phi_1, \ldots, \Phi_{n-1})}{\partial (z_1, \ldots, z_{n-1})}(z)\neq 0$. We define $U_1=U_1'\times U_1''\subset \CC^{n-1} \times \CC$ to be a neighbourhood  of $z$ such that the last inequality remains valid on $\overline U_1\cap \overline \DD^n$. Shrinking it we can also assume that $|\Phi_n|<1-\epsilon$ on $U_1$. 

We define finite families $\mathcal U_2, \ldots, \mathcal U_{n-1}$ composed of domains covering sets $\Phi^{-1}(b_2 \DD^n),\ldots, \Phi^{-1}(b_{n-1} \DD^n)$ in a similar way (decreasing $\epsilon$, if necessary). 

We shall show that for any $\delta\in(0,1)^n$ the inequality 
\begin{equation}\label{eq:vb} V_\beta (\Phi^{-1}(S(\zeta, \delta))\cap U) \preceq V_\beta (S (\zeta, \delta))
\end{equation} holds whenever $U\in \mathcal U_{n-k}$ and $k=1,\ldots, n$. Fix $k$. Permuting the variables and components we can assume that 
$\det \frac{\partial (\Phi_1,\ldots, \Phi_k)}{\partial(z_1,\ldots, z_k)} \neq 0$ on $\overline U \cap \overline \DD^n$ and that $|\Phi_{k+1}|,\ldots, |\Phi_n|<1-\epsilon$ there. Therefore, we need to prove \eqref{eq:vb} for $\delta_{k+1}, \ldots, \delta_n$ greater then $\epsilon$. In other words, it is enough to prove that $ V_\beta (\Phi^{-1}(S(\zeta, \delta))\cap U)\preceq \delta_1^{2+\beta} \cdots \delta_k^{2+\beta}$. Write $U=U'\times U''\subset \CC^k \times \CC^{n-k}$.

By the Fubini theorem it is sufficient to get the following estimate
\begin{multline}\label{eq:vbeta} V_\beta (\{(z_1,\ldots, z_k)\in U'\cap \DD^k:\ |\Phi_i(z', z'') - \zeta_i|<\delta_i,\ i=1,\ldots, k\})\leq \\ C \delta_1^{2+\beta} \cdots \delta_k^{2+\beta},
\end{multline}
where $z''\in U''$ and $C$ is a constant that does not depend on $\delta_j$ and $z''\in \overline U''$.
To proceed we need a few preparatory lemmas.

\begin{lemma}\label{lem:de}
	Let $\psi\in \mathcal O(\DD^k,\CC^k) \cap \mathcal C^1(\overline \DD^k)$,  and let $V$ be a domain such that the derivative $d_z\psi$ is non-degenerate for $z\in \overline V\cap \overline\DD^k$. 
	Let $C>0$ be a constant that bounds the norm of $\psi$ on $\overline V \cap \DD^k$, the norm of the derivative of $\psi$ on $\overline V\cap \overline \DD^k$, and the norm of derivative of $\psi^{-1}$ on $\psi(\overline V\cap \overline \DD^k)$.
	
	Then there are constants $c_1,\ldots, c_k>0$ depending on $C$ such that for every $z\in \overline V\cap \overline \DD^k$ there are a neighbourhood $U$ of $z$, a permutation $\sigma$ of the set of $k$-elements, and there are  $z_{\sigma(1)}'$, $z_{\sigma(2)}'(z_{\sigma(1)}),$ $\ldots,$ $z'_{\sigma(k)}(z_{\sigma(1)}, \ldots, z_{\sigma(k-1)})$ such that for any $\delta_1,\ldots, \delta_k>0$ the set $\{z\in U\cap \DD^k:\ |\psi_i(z)|<\delta_i:\ i=1,\ldots, k\}$ is contained in $$\{z\in \DD^k:\ |z_{\sigma(1)}-z'_{\sigma(1)}|<c_1\delta_{i_1},\ldots,  |z_{\sigma(k)} - z'_{\sigma(k)}(z_{\sigma(1)}, \ldots, z_{\sigma(k-1)})|<c_k \delta_{i_k}\},$$
	where $i_1,\ldots, i_k$ is a rearrangement of $1, \ldots, k,$ such that $\delta_{i_1}\geq \ldots \geq \delta_{i_k}$. 
\end{lemma}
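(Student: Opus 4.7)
My plan is to reduce to a linear problem and then induct on $k$.

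Step~1 (linearisation). By a quantitative inverse function theorem, the uniform bound $\|(d\psi)^{-1}\|\le C$ gives a radius $r=r(C)>0$ such that on $U:=B(z_0,r)\cap\DD^k$ one has $\tfrac12|A(z-z_0)|\le|\psi(z)-\psi(z_0)|\le 2|A(z-z_0)|$, where $A:=d_{z_0}\psi$. If $\{|\psi_i(z)|<\delta_i\}\cap U$ is empty the statement is trivial; otherwise one replaces $z_0$ by a point $z^*$ inside that set, so setting $w:=A(z-z^*)$ we have $|w_i|\le 3\delta_i$. It therefore suffices to produce the nested box for the affine system $A(z-z^*)=w$ under these bounds on $w$.

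Step~2 (induction on $k$). The base $k=1$ is trivial: $|A_{i_1,1}|=|\det A|\ge C^{-1}$ gives $|z-z^*|\le 3C\,\delta_{i_1}$. For the inductive step I would process the row $i_k$ of $A$ (the row whose $\delta$ is smallest) and choose $\sigma(k)$ to maximise $|A_{i_k,j}|$ over $j$; Laplace expansion on row $i_k$ yields
\[
|A_{i_k,\sigma(k)}|\ \ge\ \frac{|\det A|}{k\,\|A\|^{k-1}}\ \ge\ \frac{1}{k\,C^{2k-1}}.
\]
Solving that row for $z_{\sigma(k)}-z^*_{\sigma(k)}$ and substituting into the remaining rows produces a $(k-1)\times(k-1)$ system $A^{(k-1)}u=w^{(k-1)}$ (in the variables $u_j:=z_j-z^*_j$, $j\ne\sigma(k)$) satisfying
\[
|w^{(k-1)}_{i_m}|\ \le\ 3\delta_{i_m}+\frac{|A_{i_m,\sigma(k)}|}{|A_{i_k,\sigma(k)}|}\cdot 3\delta_{i_k}\ \le\ 3(1+k\,C^{2k})\,\delta_{i_m}
\]
for $m<k$, using $\delta_{i_k}\le\delta_{i_m}$. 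Crucially, $\|(A^{(k-1)})^{-1}\|\le C$, because $(A^{(k-1)})^{-1}$ is a principal block of $A^{-1}$ after the appropriate row/column permutation. The inductive hypothesis applied to $A^{(k-1)}$, with a new constant $C'=C'(C,k)$, then yields $\sigma(1),\ldots,\sigma(k-1)$ and centres $z'_{\sigma(1)},\ldots,z'_{\sigma(k-1)}$ with the required nested bounds in widths $c_\ell\,\delta_{i_\ell}$. Setting $z'_{\sigma(k)}(z_{\sigma(1)},\ldots,z_{\sigma(k-1)}):=z^*_{\sigma(k)}-\sum_{j\ne\sigma(k)}(A_{i_k,j}/A_{i_k,\sigma(k)})(z_j-z^*_j)$ completes the nested structure.

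The hard part will be propagating the constants through the induction without a dependence on $z_0$. The bound $\|(A^{(k-1)})^{-1}\|\le C$ is inherited cleanly, but $\|A^{(k-1)}\|$ can grow through the Schur-complement formula, so the constants $c_\ell$ emerge as rather crude polynomials in $C$ of degree depending on $k$; carrying out this bookkeeping is the main routine check. The only conceptual subtleties are the pivoting rule (which must choose the largest entry in the row being processed, so that the Laplace lower bound works) and the observation that the principal block of $A^{-1}$ equals the inverse of the Schur complement, which is what allows the inductive hypothesis to apply at all.
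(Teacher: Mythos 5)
There is a genuine gap, and it is in Step~1. The quantitative inverse function theorem gives you the two-sided bound $\tfrac12|A(z-z^*)|\le|\psi(z)-\psi(z^*)|\le 2|A(z-z^*)|$ only in the \emph{vector} norm; it does not give the componentwise bound $|w_i|\le 3\delta_i$ that your whole reduction rests on. Writing $\psi_i(z)-\psi_i(z^*)=(A(z-z^*))_i+E_i$, the error satisfies only $|E_i|\le\eta(r)\,|z-z^*|$ (or $O(|z-z^*|^2)$ in the interior), and on the sublevel set $|z-z^*|$ is controlled by the \emph{largest} $\delta$, namely $\delta_{i_1}$, not by $\delta_i$. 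Since the $\delta_i$ are arbitrary independent positive numbers, $\eta(r)\,\delta_{i_1}$ can dwarf $\delta_i$ for the small indices, so $|w_i|\le 3\delta_i$ fails and no choice of the constants repairs it. Concretely, take $k=2$, $\psi(z_1,z_2)=(z_1,\,z_2+z_1^2)$, $z^*=0$, $\delta_2=\delta_1^2/100$: the point $(\delta_1/2,-\delta_1^2/4)$ lies in $\{|\psi_1|<\delta_1,\ |\psi_2|<\delta_2\}$ but violates $|z_2|\le 3\delta_2$, and your affine centre $z_2'(z_1)=z_2^*-(A_{2,1}/A_{2,2})(z_1-z_1^*)\equiv 0$ gives a box of width $c_2\delta_2$ that misses it. The correct containment here needs the genuinely nonlinear centre $z_2'(z_1)=-z_1^2$; a single affine approximation at one point cannot track the curvature of the level set of $\psi_{i_k}$ across the full width $\sim\delta_{i_1}$ of the larger box, which is precisely why the lemma allows $z'_{\sigma(j)}$ to depend on the earlier coordinates as an arbitrary function.

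The linear-algebra skeleton of your Step~2 (pivoting on the largest entry of the processed row, the Laplace lower bound, and the fact that the inverse of the Schur complement is a block of $A^{-1}$) is fine for a purely affine system, but the reduction from $\psi$ to one affine system is what breaks. The paper's proof avoids this by never linearizing: it uses the bi-Lipschitz bound $\|\psi(z)-\psi(w)\|\ge C^{-1}\|z-w\|$ only to control the coordinate attached to the \emph{largest} $\delta$ (giving $|z_{\sigma(1)}-w_{\sigma(1)}|\le c_1\delta_{i_1}$), then \emph{freezes} $z_{\sigma(1)}$ and applies the inductive hypothesis to the nonlinear slice maps $z'\mapsto(\psi_2(z_{\sigma(1)},z'),\ldots,\psi_k(z_{\sigma(1)},z'))$, which remain uniformly bi-Lipschitz after choosing a nonvanishing $(k-1)\times(k-1)$ minor. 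That slice-by-slice structure is exactly what produces centres $z'_{\sigma(j)}(z_{\sigma(1)},\ldots,z_{\sigma(j-1)})$ that are nonlinear functions of the earlier coordinates, and it is the missing idea here; to fix your argument you would have to re-linearize within each slice at every level of the induction, which essentially turns it into the paper's proof.
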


Let us postpone for a moment the proof of the lemma. Its consequence is the following:

\begin{corollary}\label{cor:de}
 We keep the notation from Lemma~\ref{lem:de}. If $\beta\in (-1,0]$, then there is a constant $c$ depending only on $C$ and a number of sets $U$ covering $\overline V$ such that $$V_\beta(\{z\in \overline V\cap\overline \DD^k:\ |\psi_i(z)|<\delta_i:\ i=1,\ldots, k\})\leq c \delta_1^{2+ \beta} \cdots \delta_k^{2+\beta}.$$
\end{corollary}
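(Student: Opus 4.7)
The plan is to combine the ``change of coordinates'' provided by Lemma~\ref{lem:de} with an elementary one-dimensional disc estimate for $V_\beta$ and then iterate via Fubini.

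First, since $\overline V\cap\overline\DD^k$ is compact, cover it by finitely many neighbourhoods $U$ from Lemma~\ref{lem:de}. By subadditivity of $V_\beta$, it is enough to show that, for a single such $U$,
$$V_\beta\bigl(\{z\in U\cap\DD^k:|\psi_i(z)|<\delta_i,\ i=1,\ldots,k\}\bigr)\preceq \delta_1^{2+\beta}\cdots\delta_k^{2+\beta}.$$
By Lemma~\ref{lem:de} this set is contained in an iterated product region: $z_{\sigma(1)}$ lies in a disc $D(z'_{\sigma(1)},c_1\delta_{i_1})\cap\DD$ and, given $z_{\sigma(1)},\ldots,z_{\sigma(j-1)}$, the coordinate $z_{\sigma(j)}$ lies in a disc of radius $c_j\delta_{i_j}$ whose \emph{center} depends on the earlier variables but whose \emph{radius} does not.

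The single auxiliary fact to be used is the uniform one-variable bound
$$V_\beta\bigl(\{\lambda\in\DD:|\lambda-\lambda_0|<r\}\bigr)\preceq r^{2+\beta},\qquad \lambda_0\in\overline\DD,\ r>0,$$
with the implied constant independent of $\lambda_0$, $r$, and $\beta\in(-1,0]$. The only non-trivial case is small $r$ with $\lambda_0\in\TT$; a direct polar-coordinate computation around $\lambda_0$ yields an integral proportional to $\frac{(2r)^{\beta+1}}{\beta+1}$, and the troublesome factor $(\beta+1)^{-1}$ is cancelled precisely by the normalizing $(\beta+1)$ in the definition of $dA_\beta$.

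With this in hand, apply Fubini in the order $z_{\sigma(k)},z_{\sigma(k-1)},\ldots,z_{\sigma(1)}$. The innermost integral is bounded by a constant multiple of $(c_k\delta_{i_k})^{2+\beta}$ uniformly in the remaining variables, and each subsequent integration contributes an analogous factor. The product telescopes to a constant times $\delta_{i_1}^{2+\beta}\cdots\delta_{i_k}^{2+\beta}=\delta_1^{2+\beta}\cdots\delta_k^{2+\beta}$, and summing over the finite cover finishes the argument with a constant depending only on $C$ and the cardinality of the cover.

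There is no real obstacle: once Lemma~\ref{lem:de} has replaced the implicit constraints $|\psi_i(z)|<\delta_i$ by genuine product-type constraints in the $z$-variables, the proof reduces to the disc estimate above. The only point that deserves care is the uniformity of the constant in $\beta\in(-1,0]$, which is exactly what will be needed when Lemma~\ref{lem:3.5} is pushed to $\beta\to -1^+$ to obtain the Hardy-space statement.
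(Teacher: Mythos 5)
Your proposal is correct and follows essentially the same route as the paper: a finite cover by the neighbourhoods $U$ from Lemma~\ref{lem:de}, the iterated product-type containment that lemma provides, and then Fubini combined with the uniform one-variable estimate $A_\beta(\DD(a,\delta)\cap\DD)\preceq\delta^{2+\beta}$, which is precisely the paper's Lemma~\ref{lem:ad} (your polar-coordinate derivation of it, with the $(\beta+1)^{-1}$ cancelled by the normalization of $dA_\beta$, is a fine substitute for the paper's case analysis). No gap.
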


The above corollary can be deduced using a standard compactness argument, the Fubini theorem, and the following simple result:

\begin{lemma}\label{lem:ad}
	Let $\beta\in (-1,0]$ and $a\in\CC$. Then there is a constant $C$ independent of $\delta$, $\beta$, and $a$ such that $A_\beta(\DD(a, \delta)\cap \DD) \leq C \delta^{\beta + 2}.$
\end{lemma}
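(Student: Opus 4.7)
The plan is to reduce the general statement to the boundary case $a = 1$ by elementary estimates, then carry out the integral in a linearized chart near this boundary point. The delicate point will be that the constant $C$ must be uniform in $\beta$ as $\beta \to -1$.

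For the reduction, if $\delta \geq 1/4$ then the bound follows trivially from $A_\beta(\DD) = 1$, so I may assume $\delta$ small. If $|a| \leq 1 - 2\delta$, every $z \in \DD(a,\delta)$ satisfies $1 - |z|^2 \geq \delta$, so $(1-|z|^2)^\beta \leq \delta^\beta$ (using $\beta \leq 0$), giving $A_\beta(\DD(a,\delta)) \leq (\beta+1)\delta^{\beta+2} \leq \delta^{\beta+2}$. Otherwise $|a| > 1 - 2\delta$, and after a rotation one may take $a/|a| = 1$, so $\DD(a,\delta) \cap \DD \subset \DD(1,3\delta) \cap \DD$; the problem is reduced to bounding $A_\beta(\DD(1,3\delta) \cap \DD)$.

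For the main step I would substitute $z = 1 - \delta w$. Then $\DD(1,3\delta) \cap \DD$ corresponds to $\{|w|<3,\ \delta|w|^2<2\Re w\}$, while $1 - |z|^2 = \delta(2\Re w - \delta|w|^2)$ and $dA(z) = \delta^2 dA(w)$, yielding
$$A_\beta(\DD(1,3\delta) \cap \DD) = (\beta+1)\delta^{\beta+2}\int_{\{|w|<3,\,\delta|w|^2<2\Re w\}}(2\Re w - \delta|w|^2)^\beta dA(w),$$
so it suffices to bound the $w$-integral by $C/(\beta+1)$ uniformly. Writing $w = x+iy$ and factoring $2x - \delta(x^2+y^2) = \delta(x_+ - x)(x - x_-)$ with $x_\pm = (1 \pm \sqrt{1-\delta^2 y^2})/\delta$, I split the region into $\{x \geq 9\delta\}$ and $\{x_- < x < 9\delta\}$. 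On the first piece $2x - \delta|w|^2 \geq x$, so the integrand is $\preceq x^\beta$ and the contribution is $\preceq \int_0^3 x^\beta dx \preceq 1/(\beta+1)$. On the second, for small $\delta$ one has $x_+ \geq 3/(2\delta)$, whence $\delta(x_+-x)(x-x_-) \geq x - x_-$, so the integrand is $\preceq (x-x_-)^\beta$; integrating in $x$ over $(x_-, 9\delta)$ and then in $y$ over $(-3,3)$ gives $\preceq \delta^{\beta+1}/(\beta+1) \preceq 1/(\beta+1)$.

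The main obstacle, as anticipated, is uniformity in $\beta$: the naive Beta-type integral $\int_0^1 u^{\beta+1}(1-u)^\beta du$ diverges as $\beta \to -1$. This divergence is cancelled exactly by the $(\beta+1)$-prefactor in $dA_\beta$, and the case split above is arranged precisely so that each $x$-integration produces a single factor of $1/(\beta+1)$ that is absorbed by the prefactor.
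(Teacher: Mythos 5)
Your proof is correct, including the point that actually matters for the paper (uniformity of the constant as $\beta\to -1$): the reduction for $|a|\le 1-2\delta$ via the crude bound $(1-|z|^2)^\beta\le\delta^\beta$ on a set of normalized area $\delta^2$ is valid, the containment $\DD(a,\delta)\cap\DD\subset\DD(1,3\delta)\cap\DD$ for $1-2\delta<|a|<1+\delta$ (the intersection being empty otherwise) is correct after rotation, and the substitution $z=1-\delta w$ together with the split $\{x\ge 9\delta\}$ versus $\{x_-<x<9\delta\}$ does produce exactly one factor $1/(\beta+1)$ per piece, which the prefactor absorbs; the only cosmetic point is that your step $\delta(x_+-x)\ge 1$ needs $\delta$ a bit smaller than $1/4$, but since the trivial bound $A_\beta\le 1$ disposes of any range $\delta\ge\delta_0$, your "assume $\delta$ small" covers this. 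Your route differs from the paper's: the paper splits according to $|a|<1/2$, $|a|\ge 1$ and $1/2<|a|<1$, and in the intermediate range (when $a+\delta<1$) uses the weight comparison $(1-|z|^2)^\beta\le((a+\delta)^2-|z|^2)^\beta$ followed by the rescaling $z\mapsto z/(a+\delta)$ to push everything to the boundary case, whose estimate $A_\beta(\DD(a',\delta)\cap\DD)\simeq\delta^{\beta+2}$ is then only asserted as a check; you instead split according to the $\delta$-dependent threshold $|a|\le 1-2\delta$, which makes the interior case elementary and removes the need for the comparison-and-rescaling trick, and you carry out the boundary integral explicitly. What each approach buys: the paper's argument is shorter and its rescaling device gives the two-sided behaviour at the boundary essentially for free, while yours is fully self-contained and makes the $\beta$-uniformity--the raison d'\^etre of the lemma, since it feeds the passage to $H^2$ as $\beta\to-1$--visible rather than implicit in an unproved "check".
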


\begin{proof}[Proof of Lemma~\ref{lem:ad}] If $|a|<1/2$, then $A_\beta(\DD(a, \delta))\simeq \delta^2$ and the result is clear. If, in turn,  $|a|\geq1$, observe that we can find $a'\in \TT$ such that $\DD(a, \delta)\cap \DD$ is contained in $\DD(a',\delta)$. Then, it is enough to check that $A_\beta(\DD(a',\delta)\cap \DD)\simeq \delta^{\beta +2}$.
	
Suppose that $1> |a|>1/2$. Rotating we can assume that $1> a>1/2$. Then $\DD(a, \delta) \subset \DD(a+\delta, 2\delta) \cap \DD (0, a+\delta)$. Due to the case considered above we are able to estimate $A_\beta(\DD(a+\delta, 2\delta)\cap \DD)$ if $a+\delta\geq 1$. Thus, it is enough to focus on the remaining case $a+\delta<1$. Since $\beta<0$, the inequality $(1-|z|^2)^\beta \leq ((a+\delta)^2 - |z|^2)^\beta$ holds for $|z|<a+\delta$. Consequently, 
	\begin{multline*} A_\beta(\DD(a, \delta)) = (\beta+1)\int_{\DD(a, \delta)} (1-|z|^2)^\beta dA(z) \leq\\ (\beta+1)\int_{\DD(a+\delta, 2\delta) \cap \DD (0, a+\delta)} ((a+\delta)^2-|z|^2)^\beta dA(z).
	\end{multline*}
	A change of variables argument (precisely, $z\mapsto z/(a+\delta)$) applied to the last integral shows that $A_\beta(\DD(a, \delta)) \preceq A_\beta(\DD(1, 4\delta)\cap \DD) \simeq \delta^{2+\beta}$, as claimed.
\end{proof}

\begin{proof}[Proof of Lemma~\ref{lem:de}]
We shall construct constants $c_j$ and it will be clear that they depend only on $C$. The argument is standard and follows by an induction. Let us explain how the inductive step works.

Denote $S:=\{z\in \overline\DD^k:\ |\psi_i(z)|\leq \delta_i:\ i=1,\ldots, k\}$. Permuting the components of $\psi$ we can assume that $\delta_1 \geq \ldots \geq \delta_k$.
Fix a point in $S\cap \overline V$ (if $z\notin S$, the existence of a claimed neighbourhood $U$ is clear). Since at this point the derivative $d\psi$ is non-degenerate, expanding its Jacobian determinant by minors we see that there is a permutation of $k-1$-element set such that
	\begin{equation}\label{eq:ja}
\det \frac{\partial (\psi_2,\ldots, \psi_{k})}{\partial (z_{\sigma(2)}, \ldots, z_{\sigma(k)})}\neq 0,
\end{equation} on its neighbourhood.
In this way we choose $\sigma(1)$. Other elements $\sigma(j)$, $j\geq 2$, are to be defined in further inductive steps after a possible remuneration. Take a neighbourhood $U$ of the fixed point in $S\cap \overline V$ such that inequality \eqref{eq:ja} holds on $\overline U\cap \overline\DD^k$. 
	
Let $z,w\in U \cap S$.  Since $\psi$ is diffeomorphic (shrink $U$, if necessary), $||\psi(z)-\psi(w)|| \geq C^{-1}||z-w||\geq C^{-1} |z_{\sigma(1)} -w_{\sigma(1)}|$. In particular, $$|z_{\sigma(1)} - w_{\sigma(1)}| \leq c_1 \delta_1,$$ where $c_1$ depends just on $C$ (one can take $c_1= 2 \sqrt{k} C$ here).
	
Write $z=(z_{\sigma(1)},z')\in \CC \times \CC^{k-1}$. Since \eqref{eq:ja} holds on $U$, shrinking it we can assume it is of the form $U=U_1\times U'\subset \CC\times \CC^{k-1}$ and that for any $z_{\sigma(1)}\in \overline U_1\cap \overline \DD$ the map $$U'\cap \DD^{k-1}\ni z' \mapsto (\psi_2(z_{\sigma(1)},z'),\ldots, \psi_{k}(z_{\sigma(1)},z'))$$ is a diffeomorphism and the bound on the derivative and its inverse is uniform with respect to $z_{\sigma(1)} \in \overline U_1\cap \overline \DD$ and depends only on a constant $C$. These maps and the set $S':=\{z'\in \overline \DD^{k-1}:\ |\psi_j( z_{\sigma(1)},z')|<\delta_j,\ j=2,\ldots, k\}$ are the objects we are applying the inductive process to. To be more precise, after a possible permutation of elements $\sigma(2), \ldots, \sigma(k)$ we get that $\det \frac{\partial (\psi_3,\ldots, \psi_{k})}{\partial (z_{\sigma(3)}, \ldots, z_{\sigma(k)})}(z)\neq 0$ for $z\in \overline U\cap \overline\DD^k$ (shrink $U$, if necessary). By choosing this permutation we define $\sigma(2)$. Then, we show exactly as above that for points $(z_{\sigma(1)},z'), (z_{\sigma(1)},w') \in U$, where $z'=(z_{\sigma(2)}, \ldots, z_{\sigma(k)})$ and $w'=(w_{\sigma(2)}, \ldots, w_{\sigma(k)})$, the estimate $$|z_{\sigma(2)} - w_{\sigma(2)}|\leq c_2 \delta_2 $$ holds with a constant $c_2$ depending only on $C$.
	
	Sets $U$ we are looking for are obtained after a possible shrinking at every step of carrying out of this process. 
	
	The assertion is a consequence of what we have just proven.
	
\end{proof}

We are ready to finish the proof of Lemma~\ref{lem:3.5}. We have boiled it down to inequality \eqref{eq:vbeta} which turns out ot be a consequence of the Fubini theorem and Corollary~\ref{cor:de} applied to $\psi:=\Phi_I(\cdot, z'')-\zeta_I$, where $\zeta_I=(\zeta_1,\ldots, \zeta_k)$ and $\Phi_I=(\Phi_1,\ldots, \Phi_k)$.

\end{proof}

\begin{proof}[Proof of Lemma~\ref{lem:3.5} for $\beta\geq 0$]

We shall start this part with the following observation: 
\begin{claim}\label{claim:SL} Let $\varphi\in \mathcal O(\DD^k, \DD^k)$ and $\zeta\in \TT^k$. Let $C$ be a positive constant and $U$ be a neighbourhood of $\zeta$ such that $|\det d_z\Phi|>C$ for $z\in U\cap \DD^k$. Then for $z\in U\cap \DD^k$ the following inequality holds:
	$$(1-|\Phi_1(z)|^2) \cdots (1- |\Phi_k(z)|^2) \geq \left(\frac{C}{k!}\right)^k (1-|z_1|^2) \cdots (1- |z_k|^2).$$
\end{claim}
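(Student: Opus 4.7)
The plan is to apply the one-variable Schwarz--Pick lemma slice-wise to each component of $\Phi$ and then control the determinant via its Leibniz expansion.

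First, fix $z\in U\cap \DD^k$ and indices $i,j\in\{1,\ldots,k\}$. Since the slice map
$$\zeta\mapsto \Phi_j(z_1,\ldots,z_{i-1},\zeta,z_{i+1},\ldots,z_k)$$
is a holomorphic self-map of $\DD$, the classical Schwarz--Pick lemma yields
$$\left|\frac{\partial \Phi_j}{\partial z_i}(z)\right|(1-|z_i|^2)\;\leq\;1-|\Phi_j(z)|^2, \qquad i,j=1,\ldots,k.$$

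Next, I would expand the Jacobian determinant by the Leibniz formula as
$$\det d_z\Phi \;=\; \sum_{\sigma\in S_k}\operatorname{sgn}(\sigma)\,\partial_{\sigma(1)}\Phi_1(z)\cdots\partial_{\sigma(k)}\Phi_k(z),$$
and apply the slice-wise Schwarz--Pick bound to each of the $k$ factors inside each summand. Since $\prod_{j=1}^k(1-|z_{\sigma(j)}|^2)=\prod_{j=1}^k(1-|z_j|^2)$ for any permutation $\sigma$, every one of the $k!$ summands is bounded in modulus by $\prod_j(1-|\Phi_j(z)|^2)\big/\prod_j(1-|z_j|^2)$. The triangle inequality then gives
$$|\det d_z\Phi|\;\leq\;k!\cdot\frac{\prod_{j=1}^k(1-|\Phi_j(z)|^2)}{\prod_{j=1}^k(1-|z_j|^2)}.$$

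Combining this with the hypothesis $|\det d_z\Phi|>C$ and rearranging yields
$$\prod_{j=1}^k(1-|\Phi_j(z)|^2)\;\geq\;\frac{C}{k!}\prod_{j=1}^k(1-|z_j|^2),$$
which is at least as strong as the stated bound with constant $(C/k!)^k$ in the relevant regime $C\leq k!$ (note that $0<1-|z_j|^2\leq 1$). There is really no substantive obstacle here: once one commits to applying Schwarz--Pick componentwise, the remaining work is purely the combinatorial expansion of the determinant and the observation that the denominators $\prod(1-|z_{\sigma(j)}|^2)$ are permutation-invariant.
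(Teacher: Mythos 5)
Your proof is correct and follows essentially the same route as the paper: slice-wise Schwarz--Pick bounds on the partial derivatives combined with the Leibniz expansion of the Jacobian determinant. The only cosmetic difference is that the paper extracts a single permutation $\sigma$ with $\bigl|\prod_{j}\partial\Phi_{\sigma(j)}/\partial z_j\bigr|\geq C/k!$ and multiplies the corresponding $k$ Schwarz inequalities, whereas you bound all $k!$ summands at once via the triangle inequality; both arguments in fact yield the stronger constant $C/k!$ in place of $(C/k!)^k$.
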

\begin{proof}[Proof of the claim] To prove the claim recall that the Schwarz lemma says that $|\varphi'(\lambda)| (1-|\lambda|^2) \leq (1-|\varphi(\lambda)|^2)$ for $\lambda \in \DD$, whenever $\varphi$ is a holomorphic self-function of the unit disc. Fix $z'\in U\cap \DD^k$. The condition on the derivative implies that there is a permutation of a set of $k$-elements such that $\left|\prod_{j=1}^k \frac{\partial \Phi_{\sigma(j)}}{\partial z_j}(z) \right| \geq C/k!$. Applying the above-mentioned Schwarz lemma to analytic discs $z_j\mapsto \Phi_{\sigma(j)} (z'_1, \ldots, z_j,\ldots, z_k')$, $j=1,\ldots, k$, we get $k$ inequalities. Multiplying them out we get the claim.
\end{proof}

With this tools in hands to prove the main assertion of the lemma it is enough to follow almost line by line the proof of Theorem~3.1 of \cite{Bay}. Keeping all the notation from there let us just additionally define $\tilde m:=\min_I \tilde m_I$, where $\tilde m_I:=\min_{z\in \overline{\mathcal G_I}} \det(M_\Phi(I, J, z))$.

To get the assertion it is enough to show that there is $C>0$ such that 
$$V_\beta (\Phi^{-1}(S(\zeta, \delta))) \leq C V_\beta(S(\zeta, \delta)).$$ As in the proof of Theorem~3.1 this boils down to estimating the volume $$V_\beta (\{z'_J\in V_l'(\xi_l):\ |\Phi_i(z_J', z_J'') - \zeta_i|<\delta_i,\ i\in I\}).$$ Changing the variables and making use of Claim~\ref{claim:SL} we can estimate the above volume exactly as in the proof of Theorem 3.1 in \cite{Bay}.
\end{proof}

\begin{proof}[Proof of Theorem~\ref{thm:3.2}]
	For the weighted Bergman spaces the assertion follows from Lemma~\ref{lem:3.5}. To get it for the Hardy space it suffices to recall that if a constant $C_\beta$ in \eqref{eq:box} is independent of $\beta$, then \eqref{eq:box} implies the boundedness of the composition operator on the Hardy space.
\end{proof}

{\bf Acknowledgements.} I would like to thank the anonymous referee for numerous remarks that substantially improved the shape of the paper.

\end{document}